\newtheorem{lemma}{Lemma}[section]
\newtheorem{theorem}[lemma]{Theorem}
\newtheorem*{theorem*}{Theorem}
\newtheorem{corollary}[lemma]{Corollary}
\newtheorem{proposition}[lemma]{Proposition}
\newtheorem*{proposition*}{Proposition}
\theoremstyle{remark}
\theoremstyle{definition}
\newtheorem*{definition*}{Definition}
\newtheorem*{conjecture*}{Conjecture}
\newtheorem*{remark*}{Remark}
\newtheorem*{remarks*}{Remarks}
\newtheorem*{lemma*}{Lemma}
\newtheorem*{claim*}{Claim}
\newcommand{\C}{{\mathbb C}}
\newcommand{\E}{{\mathbb E}}
\newcommand{\N}{{\mathbb N}}
\renewcommand{\P}{{\mathbb P}}
\newcommand{\Q}{{\mathbb Q}}
\newcommand{\R}{{\mathbb R}}
\newcommand{\T}{{\mathbb T}}
\newcommand{\Z}{{\mathbb Z}}
\newcommand{\CX}{{\mathcal X}}
\newcommand{\e}{\mathrm{e}}
\newcommand{\norm}[1]{\left\Vert #1\right\Vert}
\newcommand{\nnorm}[1]{\lvert\!|\!| #1|\!|\!\rvert}
\newcommand{\inv}{^{-1}}
\begin{document}

\title[Integer part independent polynomial averages and applications along primes]{Integer part independent polynomial averages and applications along primes}

\author{Dimitris Karageorgos and Andreas Koutsogiannis}
\address[Dimitris Karageorgos]{National and Kapodistrian University of Athens, Department of mathematics, Panepistemioupolis, Athens, Greece} \email{dimitris\_kar@math.uoa.gr}
\address[Andreas Koutsogiannis]{The Ohio State University, Department of mathematics, Columbus, Ohio, USA} \email{koutsogiannis.1@osu.edu}

\begin{abstract}
Exploiting the equidistribution properties of polynomial sequences,
 following the methods developed by Leibman (\cite{L}) and Frantzikinakis (\cite{F1} and \cite{F2}) we show that the ergodic averages with iterates given by the integer part of real-valued strongly independent polynomials, converge in the mean to the "right"-expected limit. These results have, via Furstenberg's correspondence principle, immediate combinatorial applications while combining these results with methods from \cite{FHK} and \cite{K2} we get the respective "right" limits and combinatorial results for multiple averages for a single sequence as well as for several sequences  along prime numbers.  
\end{abstract}

\subjclass[2010]{Primary: 37A45; Secondary: 22F30, 37A05}

\keywords{Ergodic average, recurrence, equidistribution, nilmanifold, prime numbers.}

\maketitle

\section{Introduction}

The study of the limiting behaviour in $ L^2(\mu) $ of multiple ergodic averages 
of the form
 \begin{equation}\label{E:e4}
\frac{1}{N}\sum_{n=1}^N T^{a_1(n)}f_1\cdot \ldots\cdot T^{a_\ell(n)}f_\ell,
\end{equation}
where $ (a_1(n))_n,\ldots, (a_\ell(n))_n $ are sequences of integers, $T:X\to X$ is an invertible measure preserving transformation on the probability space $(X,\mathcal{B},\mu)$\footnote{We shall call the quadruple $(X,\mathcal{B},\mu,T)$ \emph{system}.} and $f_1,\ldots,f_\ell\in L^\infty(\mu), $ has been of great importance in the area of ergodic theory. The originator of this was Furstenberg (in \cite{Fu}) who first studied averages as in \eqref{E:e4} in the case where $ a_i(n)=in,$ $1\leq i\leq \ell,$ in order to provide an ergodic theoretical proof of Szemer\'{e}di's Theorem. 

Bergelson and Liebman in \cite{BL} studied the case where $ a_i$ are integer polynomials with no constant term and proved a polynomial extension of Szemer\'{e}di's theorem.

Another question that arises studying the limiting behaviour of \eqref{E:e4} is if the limit exists what can we say about it. In this direction and for the case of weakly mixing systems Fustenberg, Katznelson and Ornstein proved in \cite{FKO} that if $ a_i(n)=in,$ $1\leq i\leq\ell,$ the multiple ergodic averages in \eqref{E:e4} converge to the product of integrals of $ f_i $. Bergelson in \cite{B} extended this result in the case where $ a_i$ are essential distinct integer polynomials (i.e., all polynomials and their differences are not constant).  Furthermore, Frantzikinakis and Kra in \cite{FK} established the same result under the total ergodicity assumption of the system for  $ a_i $ independent integer polynomials (i.e., every non-trivial combination of $a_i$'s on the integers is not constant).

The convergence of multiple ergodic averages with several commuting transformations has been studied as well. Under weaker assumptions to the weakly mixing one on the system, the convergence to the "right"-expected limit, for the case of iterates of integer polynomials with different degrees and of integer part of real polynomials with different degrees, is treated in \cite{CFH} and \cite{K1} respectively, where by \emph{right limit} we mean that under the ergodic assumption, the limit is equal to the product of integrals of $f_i.$

We also have results in other, non-polynomial classes of iterates. Bergelson and H\r{a}land-Knutson in \cite{BK} treated the case of iterates of integer part Tempered functions on a weakly mixing system, while Frantzikinakis treated the case of iterates of integer part logarithmico-exponential Hardy field functions with polynomial degree of different growth (in \cite{F1} for a single $T$ and \cite{F4} for multiple commuting $T_i$'s while he also showed that in the sublinear, $0$-degree case, the commutativity assumption of $T_i$'s can be lifted), all showing convergence to the "right"-expected limit. We highlight at this point that the last results of Frantzikinakis are very strong and hold for general systems, a behavior that we didn't have for any class of polynomial iterates, with degree greater than $1,$ so far. 
 
In this paper we study the multiple convergence of ergodic averages with integer part of real polynomial iterates of  several sequences of the form
 \begin{equation}\label{E:fmc2}
\frac{1}{N}\sum_{n=1}^N T^{[p_1(n)]}f_1\cdot \ldots\cdot T^{[p_\ell(n)]}f_\ell.
\end{equation} 
We show in our main theorem, Theorem~\ref{T:ss}, that for the polynomial sequences $(p_1(n))_n,$ $\ldots,(p_\ell(n))_n,$ where every non-trivial linear combination of $p_i$'s has at least one non-constant irrational coefficient, under the ergodic assumption of $T$ (i.e., there is no non-trivial set invariant under $T$), the limit of \eqref{E:fmc2} as $N\to\infty$ in $L^2(\mu)$ is the "right"-expected one, meaning equal to $\prod_{i=1}^\ell \int f_i\;d\mu.$ So, using the ergodic decomposition, we have for a general system that the aforementioned limit is equal to the product of the conditional expectations of $f_i$'s, i.e., equal to $\prod_{i=1}^\ell \mathbb{E}(f_i\vert \mathcal{I}(T)).$ Measure theoretical and hence, via Furstenberg's correspondence principle, combinatorial applications of Theorem~\ref{T:ss} are given in Theorems~\ref{T:r1}, \ref{T:r2} and \ref{T:s11}. The strong nature of our results is also reflected in  Theorem~\ref{T:topol}, Corollary~\ref{C:topol}, Theorem\ref{T:comb} and Corollary~\ref{C:comb}, where we obtain some additional applications in topological dynamics and combinatorics respectively.

In order to prove Theorem~\ref{T:ss} we follow Frantzikinakis' approach (from \cite{F1} and \cite{F2}) and we also use some results of Leibman (\cite{L}) and Host-Kra (\cite{HK99}). More specifically, via Lemma~\ref{L:4.7} (Lemma~4.7 in \cite{F1}), which informs us that the nilfactor of our system is also the characteristic factor for the family of polynomials of interest, and the Structure theorem, Theorem~\ref{T:HK}, of Host and Kra  (\cite{HK99}), it suffices to show Theorem~\ref{T:ss} when our system is a nilsystem. To complete the proof, we use Theorem~\ref{T:T1}, an equidistribution result first proved by Frantzikinakis in \cite{F2} for logarithmico-exponential Hardy field functions with polynomial degree of different growth. In order to derive Theorem~\ref{T:T1}, we use Theorem~\ref{T:t1}, a central equidistribution result due to Leibman (\cite{L}) for a polynomial sequence in a connected and simply connected group.

Lastly, combining  Theorem~\ref{T:mc}, a result from \cite{F1} on multiple convergence of a single real polynomial sequence, and Theorem~\ref{T:ss} with some results from \cite{K2}, we get the analogous results in Theorems~\ref{T:ssp1} and \ref{T:ssp} respectively, together with their implications, for averages along prime numbers.

\medskip

Throughout the article we will highlight at some points the fact that one cannot expect to obtain the nice convergence and recurrent results that we get for other classical families of polynomials, say, integer polynomials, not even for very special families of them, fact that "forces" one to deal with real-valued polynomial families which satisfy some assumptions (see next section).

\subsection*{Notation} With $\N=\{1,2,\ldots\},$ $\mathbb{Z},$ $\mathbb{Q},$ $\mathbb{R}$ and $\mathbb{C}$ we denote the set of natural, integer, rational, real and complex numbers respectively. For $N\in\N$ we write $[1,N]$ to denote the set $\{1,\ldots,N\}.$ For a measurable function $f$ on a measure space $X$
with a transformation $T:X\to X,$ we denote with $Tf$ the composition $f\circ T.$  $\T^s =\R^s/\Z^s $ denotes the $s$ dimensional torus, $ e(t)=e^{2\pi it}$ denotes the exponential map, $(a(n))_n$ denotes a sequence indexed over the natural numbers (i.e., $(a(n))_{n\in \mathbb{N}}$), and  $ [\cdot] $ denotes the integer part (floor) function.

\section{Main results}

\begin{definition*}
Let $\ell\in\mathbb{N}$ and $a_1(t),\ldots,a_\ell(t)$ be real valued functions. We say that the family of sequences of integers $\{([a_1(n)])_n,\ldots,([a_\ell(n)])_n\}$ is \emph{good for multiple convergence} if for every system $(X,\mathcal{B},\mu,T)$ and functions $f_1,\ldots,f_\ell\in L^\infty(\mu)$ the limit \begin{equation}\label{E:fmc}
\lim_{N\to\infty}\frac{1}{N}\sum_{n=1}^N T^{[a_1(n)]}f_1\cdot \ldots\cdot T^{[a_\ell(n)]}f_\ell
\end{equation}
exists in $L^2(\mu).$
\end{definition*}

\begin{remark*}
It follows from Theorem~1.3 in \cite{K1} that any family of sequences coming from the integer part of polynomials with real
coefficients is good for multiple convergence.
\end{remark*}

\begin{definition*}\label{D:d1}
For $\ell\in\mathbb{N},$ let $\{p_1,\ldots,p_\ell\}$ be a family of real polynomials. We say that this family is \emph{strongly independent} (or that the polynomials $p_1,\ldots,p_\ell$ are \emph{strongly independent}) if any non-trivial linear combination of the polynomials $p_i$ has a non-constant irrational coefficient.

Note that a family with one element, $\{p\},$ where $p\in\mathbb{R}[t],$ is strongly independent iff  $p(t)\neq cq(t)+d$ for all $c,d\in\mathbb{R}$ and $q\in \Q[t]$ (or $\Z[t]$ equivalently).
\end{definition*}

\subsection*{Example.} The family of polynomials $\{\sqrt{2}t^2+t,\sqrt{3}t^2-t\}$ is strongly independent while the family $\{\sqrt{5}t^3+t^2+\sqrt{6}t, t^2, \sqrt{7}t\}$ is not.

\medskip

We also remark that our definition, in the case where the polynomial sequences are constant, coincide with the definition of the \emph{good} family of polynomials given in \cite{F3}, Problem 10.

\medskip

 Via our method of proof, it will become later clear that the assumptions on the real polynomials that we have are in a sense "optimal", since they are exactly what one has to assume in order to obtain the Weyl-type equidistribution results we have to show in order to prove our main result, i.e., that the limit of the ergodic averages over real strongly independent polynomials exists and it is the "right" one.

\begin{theorem}\label{T:ss}
Let $\ell\in\mathbb{N},$ $p_1,\ldots,p_\ell\in \mathbb{R}[t]$ be real strongly independent polynomials, $(X,\mathcal{B},\mu,T)$ be an ergodic system and $f_1,\ldots,f_\ell\in L^\infty(\mu).$ Then \begin{equation}\label{E:ss1}
\lim_{N\to\infty}\frac{1}{N}\sum_{n=1}^N T^{[p_1(n)]}f_1\cdot\ldots\cdot T^{[p_\ell(n)]}f_\ell=\prod_{i=1}^\ell \int f_i\;d\mu,
\end{equation}
where the convergence takes place in $L^2(\mu).$
\end{theorem}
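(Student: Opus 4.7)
The plan is to follow the roadmap sketched in the introduction: reduce to the case of a nilsystem via a characteristic-factor argument, then deduce the limit from a Weyl-type equidistribution statement on the product of the nilmanifold with a torus that tracks fractional parts.

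First, by Lemma~\ref{L:4.7}, the nilfactor of $(X,\mathcal{B},\mu,T)$ is characteristic for the family $\{[p_1(n)],\ldots,[p_\ell(n)]\}$, so one may assume each $f_i$ is measurable with respect to this nilfactor. Combining this with the Host--Kra structure theorem (Theorem~\ref{T:HK}) and a standard $L^2$-approximation argument, the problem is reduced to the case where $(X,\mathcal{B},\mu,T)$ is an ergodic nilsystem $G/\Gamma$ with $T$ acting by left translation by some $b\in G$ and each $f_i\in C(G/\Gamma)$. A further routine reduction (via an embedding into a connected, simply connected cover) allows one to assume that $G$ itself is connected and simply connected.

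In this setting, real exponentiation is available: the map $t\mapsto b^t$ is a well-defined continuous homomorphism $\R\to G$. Writing $[p_i(n)]=p_i(n)-\{p_i(n)\}$, one has $b^{[p_i(n)]}x=b^{p_i(n)}\cdot b^{-\{p_i(n)\}}x$, so the ergodic average is governed by the joint sequence
\begin{equation*}
\bigl(b^{p_1(n)}\Gamma,\ldots,b^{p_\ell(n)}\Gamma,\{p_1(n)\},\ldots,\{p_\ell(n)\}\bigr)\in (G/\Gamma)^\ell\times \T^\ell.
\end{equation*}
It suffices to show that this sequence is equidistributed in the product of Haar measures; the discontinuity of $u\mapsto b^{-u}$ across integer values causes no obstruction since the limiting measure has no atoms at the relevant boundary, so the resulting Riemann-integration step goes through.

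The displayed sequence is a polynomial sequence in the connected, simply connected nilpotent Lie group $G^\ell\times\R^\ell$, quotiented by the cocompact lattice $\Gamma^\ell\times\Z^\ell$. Theorem~\ref{T:T1}, derived from Leibman's criterion (Theorem~\ref{T:t1}), reduces equidistribution to verifying that for every non-trivial horizontal character $\chi$ of the product nilmanifold, the real polynomial obtained by composing $\chi$ with the sequence has at least one non-constant irrational coefficient. Such a composition is a linear combination with real coefficients of the polynomials $p_1,\ldots,p_\ell$ (up to an additive constant), and strong independence of the $p_i$'s is precisely the hypothesis that every non-trivial such combination has a non-constant irrational coefficient; ergodicity of $T$ furnishes the compatibility between the horizontal characters of $G/\Gamma$ and the circle acting via $b^t$ that is needed to list the relevant characters. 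The hardest part of the argument is this last verification---unpacking the horizontal characters of the composite nilmanifold and translating strong independence into the exact equidistribution condition required by Theorem~\ref{T:T1}. Once equidistribution is in hand, Fubini identifies the limit as $\prod_{i=1}^\ell \int f_i\,d\mu$, which is \eqref{E:ss1}.
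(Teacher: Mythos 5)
Your overall roadmap is exactly the paper's: Lemma~\ref{L:4.7} to pass to the nilfactor, the Host--Kra structure theorem plus an approximation argument to reduce to an ergodic nilsystem with continuous functions, and then a Weyl/Leibman equidistribution statement to identify the limit as $\prod_i\int f_i\,dm_X$. The difference is that where the paper simply applies Theorem~\ref{T:T1}(ii) (whose proof, via Lemma~\ref{L:5.1}, Lemma~\ref{L:5.2} and Proposition~\ref{P:1}, is given separately), you unfold that machinery inline --- the fractional-part decomposition $b^{[p_i(n)]}=b^{p_i(n)}b^{-\{p_i(n)\}}$, the augmented nilmanifold $(G/\Gamma)^\ell\times\T^\ell$, and the character computation --- and it is in this unfolding that one step does not survive scrutiny as written.

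The problematic step is the ``further routine reduction'' that lets you assume $G$ is connected and simply connected while still treating $b$ as acting ergodically on $X=G/\Gamma$ with invariant measure $m_X$. These two assumptions are generally incompatible: an ergodic nilsystem produced by Theorem~\ref{T:HK} may have disconnected $X$ (e.g.\ an ergodic rotation on $\Z/2\Z\times\T$), in which case $G$ cannot be connected, and the standard remedy --- embedding $X$ as a subnilmanifold of some $\tilde X=\tilde G/\tilde\Gamma$ with $\tilde G$ connected and simply connected (it is an embedding into a larger nilmanifold, not a cover) --- destroys ergodicity: in $\tilde X$ the real orbit closure $\overline{(b^s\tilde\Gamma)}_{s\in\R}$ is typically strictly larger than the integer orbit closure $X$. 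Consequently your joint sequence is \emph{not} equidistributed with respect to ``the product of Haar measures'' on $(G/\Gamma)^\ell\times\T^\ell$ in the sense you need; one must instead prove equidistribution in $\overline{(b^s\tilde\Gamma)}_{s\in\R}^{\,\ell}\times\T^\ell$ and then check that pushing forward under $(y,s)\mapsto b^{-s}y$ recovers $m_X$, which is precisely the content of Lemma~\ref{L:5.1}. (You also attribute the horizontal-character criterion to Theorem~\ref{T:T1}; it is Theorem~\ref{T:t1} that provides it, and the rational-independence input from ergodicity enters as in Proposition~\ref{P:1}.) The gap is repairable --- either carry out the Lemma~\ref{L:5.1} bookkeeping honestly, or do what the paper does and invoke Theorem~\ref{T:T1}(ii) directly for the original, possibly non-connected, nilmanifold, where ergodicity of $b$ gives $\overline{(b^nx)}_n=X$ and hence equidistribution in $X^\ell$.
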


\begin{remark*}
The assumption that the polynomials are strongly independent is necessary, since even for $\ell=1,$ $p(t)=\sqrt{2}t$ and ergodic rotations on the torus, \eqref{E:ss1} typically fails.
\end{remark*}

Note that even for a family of independent, integer polynomials it is not true in general that one has convergence as in \eqref{E:ss1}, i.e., to the right limit for a general ergodic system (see remark after Theorem~\ref{T:r1}), but has to have more assumptions on the system, as total ergodicity (see \cite{FK}). Hence, someone is "forced" to work with real polynomials in order to have this nice convergence behavior.

\medskip

We now state a principle due to Furstenberg, which allows one to obtain combinatorial results from ergodic theoretical ones. 


\begin{theorem*}[Furstenberg correspondence principle,~\cite{Fu}]
Let $E$ be a subset of integers. There exists a system $(X,\mathcal{B},\mu,T)$ and a set $A\in \mathcal{B},$ with $\mu(A)=\bar{d}(E)$ \footnote{For a set $ A \subseteq \N $ we define its \emph{upper density}, 
$ \bar{d}(A), $ as $ \bar{d}(A)=\limsup_{N\to \infty}\frac{
|A\cap  \{1,\ldots, N \}|}{N}. $ If the limit of the previous expression exists as $N\to\infty,$ we say that its value, denoted with $d(A),$ is the \emph{density} of $A.$} such that \begin{equation*}\label{E:FCP}
\bar{d}(E\cap(E-n_1)\cap\ldots\cap (E-n_\ell))\geq \mu(A\cap T^{-n_1}A\cap\ldots\cap T^{-n_\ell}A)
\end{equation*}
for every $n_1,\ldots,n_\ell\in \mathbb{Z}$ and $\ell\in \mathbb{N}.$
\end{theorem*}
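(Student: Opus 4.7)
The plan is to realise $E$ inside a symbolic shift system and extract the required invariant measure from empirical orbital averages. Take $X = \{0,1\}^{\Z}$ with the product topology (compact and metrisable), let $T$ be the left shift $(Ty)(k) = y(k+1)$, and associate to $E$ the point $x = \one_E \in X$. Let $A = \{y \in X : y(0) = 1\}$, a clopen cylinder. With these choices one verifies that $T^n x \in A \cap T^{-n_1}A \cap \ldots \cap T^{-n_\ell}A$ if and only if $n \in E \cap (E - n_1) \cap \ldots \cap (E - n_\ell)$, so the combinatorial densities we wish to control are precisely the empirical visit frequencies of the orbit $(T^n x)_n$ to clopen cylinder sets.

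First I would choose $N_k \to \infty$ realising the upper density, $|E \cap [1,N_k]|/N_k \to \bar{d}(E)$, and form the empirical measures
$$
\mu_k = \frac{1}{N_k}\sum_{n=1}^{N_k} \delta_{T^n x}.
$$
By weak-$*$ sequential compactness on the space of Borel probability measures on $X$, I pass to a further subsequence along which $\mu_k \to \mu$. Invariance of $\mu$ under $T$ follows from the telescoping estimate
$$
\Bigl|\int \varphi \, d(T_*\mu_k) - \int \varphi \, d\mu_k\Bigr| = \frac{1}{N_k}\bigl|\varphi(T^{N_k+1}x) - \varphi(Tx)\bigr| \to 0
$$
for every continuous $\varphi$, so $(X,\mathcal{B}(X),\mu,T)$ is the system we take. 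Since $A$ and $B := A \cap T^{-n_1}A \cap \ldots \cap T^{-n_\ell}A$ are clopen, their indicators are continuous, hence
$$
\mu(A) = \lim_{k} \frac{|E \cap [1,N_k]|}{N_k} = \bar{d}(E),
$$
while
$$
\mu(B) = \lim_{k} \frac{|E \cap (E - n_1) \cap \ldots \cap (E - n_\ell) \cap [1,N_k]|}{N_k} \leq \bar{d}\bigl(E \cap (E - n_1) \cap \ldots \cap (E - n_\ell)\bigr),
$$
which is exactly the asserted inequality.

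The only genuine subtlety is coordinating the two subsequence extractions: one first refines $\{N_k\}$ so that $|E \cap [1,N_k]|/N_k \to \bar{d}(E)$, and then further refines it to secure weak-$*$ convergence of $\mu_k$. Because any subsequence of a convergent numerical sequence retains the same limit, the identity $\mu(A) = \bar{d}(E)$ survives the second extraction and everything remains consistent. Beyond this packaging, the proof is essentially bookkeeping with indicators of clopen cylinders, so I do not expect any genuinely hard step; the key conceptual point is the tautological translation of combinatorial frequencies into visit frequencies for the shift orbit of $\one_E$.
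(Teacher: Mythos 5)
Your proof is correct and is precisely the classical argument for the correspondence principle (weak-$*$ limits of empirical measures along a density-realising subsequence in the two-sided shift on $\{0,1\}^{\Z}$); the paper does not reprove this statement but cites Furstenberg, whose original argument is essentially the one you give. The one point worth noting explicitly is that a single weak-$*$ limit $\mu$ handles all choices of $\ell$ and $n_1,\ldots,n_\ell$ simultaneously because every set $A\cap T^{-n_1}A\cap\ldots\cap T^{-n_\ell}A$ is clopen, and your write-up already delivers this.
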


As a consequence of Theorem~\ref{T:ss} we get the following recurrence result (for a proof of this result see for example Theorem~2.8 in \cite{F1}).

\begin{theorem}\label{T:r1}
Let $\ell\in\mathbb{N}$ and $p_1,\ldots,p_\ell\in \mathbb{R}[t]$ be real strongly independent polynomials. Then for every system $(X,\mathcal{B},\mu,T)$ and $A\in \mathcal{B}$ we have \begin{equation}\label{E:r1}
\lim_{N\to\infty}\frac{1}{N}\sum_{n=1}^N \mu\Big(A\cap T^{-[p_1(n)]}A\cap\ldots\cap T^{-[p_\ell(n)]}A\Big)\geq (\mu(A))^{\ell+1}.
\end{equation}
\end{theorem}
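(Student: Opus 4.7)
The plan is to derive Theorem~\ref{T:r1} as a direct consequence of Theorem~\ref{T:ss}, using the ergodic decomposition to pass from the ergodic case to a general system, then concluding with a conditional-expectation computation and Jensen's inequality. The argument is standard for corner-type recurrence statements and so the main work has already been done in Theorem~\ref{T:ss}; here I would mostly be setting up the correct bookkeeping.

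First, I would invoke the ergodic decomposition $\mu=\int \mu_x\,d\mu(x)$ of $(X,\mathcal{B},\mu,T)$. For $\mu$-a.e.\ $x$ the component $(X,\mathcal{B},\mu_x,T)$ is ergodic, so Theorem~\ref{T:ss} applied with $f_1=\cdots=f_\ell=\mathbf{1}_A$ gives
\[
\lim_{N\to\infty}\frac{1}{N}\sum_{n=1}^N T^{[p_1(n)]}\mathbf{1}_A\cdot\ldots\cdot T^{[p_\ell(n)]}\mathbf{1}_A \;=\; \bigl(\mu_x(A)\bigr)^{\ell}
\]
in $L^2(\mu_x)$. Assembling these components (which, as the authors note, amounts to the fact that the limit for a general system is $\prod_i \mathbb{E}(f_i\mid \mathcal{I}(T))$), the same limit holds in $L^2(\mu)$ with $(\mu_x(A))^\ell$ replaced by $(\mathbb{E}(\mathbf{1}_A\mid \mathcal{I}(T)))^\ell$.

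Next, I would multiply both sides by $\mathbf{1}_A$ and integrate against $\mu$. Since the averages are uniformly bounded in $L^\infty$ by $1$, the $L^2(\mu)$ convergence combined with Cauchy--Schwarz legitimates exchanging limit and integral, yielding
\[
\lim_{N\to\infty}\frac{1}{N}\sum_{n=1}^N \mu\bigl(A\cap T^{-[p_1(n)]}A\cap\cdots\cap T^{-[p_\ell(n)]}A\bigr) \;=\; \int \mathbf{1}_A\,\bigl(\mathbb{E}(\mathbf{1}_A\mid \mathcal{I}(T))\bigr)^{\ell}\,d\mu.
\]
Since $\bigl(\mathbb{E}(\mathbf{1}_A\mid \mathcal{I}(T))\bigr)^{\ell}$ is $\mathcal{I}(T)$-measurable, pulling $\mathbf{1}_A$ into the conditional expectation rewrites the right-hand side as $\int \bigl(\mathbb{E}(\mathbf{1}_A\mid \mathcal{I}(T))\bigr)^{\ell+1}\,d\mu$.

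Finally, applying Jensen's inequality for the convex function $t\mapsto t^{\ell+1}$ on $[0,1]$, we obtain
\[
\int \bigl(\mathbb{E}(\mathbf{1}_A\mid \mathcal{I}(T))\bigr)^{\ell+1}\,d\mu \;\geq\; \Bigl(\int \mathbb{E}(\mathbf{1}_A\mid \mathcal{I}(T))\,d\mu\Bigr)^{\ell+1} \;=\; \bigl(\mu(A)\bigr)^{\ell+1},
\]
which is exactly \eqref{E:r1}. The main (and essentially only) conceptual step is the passage from the ergodic case of Theorem~\ref{T:ss} to the statement for a general system via ergodic decomposition; after that, the proof is a short chain of inequalities, and no new ingredients are needed.
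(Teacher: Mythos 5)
Your proposal is correct and follows exactly the route the paper intends: the paper does not write out a proof of Theorem~\ref{T:r1} but points to Theorem~2.8 of \cite{F1}, whose argument is precisely this one --- ergodic decomposition to upgrade Theorem~\ref{T:ss} to the identity \eqref{E:ss1'} for general systems, integration against $\mathbf{1}_A$, the tower property of conditional expectation, and Jensen's inequality for $t\mapsto t^{\ell+1}$. No gaps; the bookkeeping (Cauchy--Schwarz to pass the limit inside the integral, $\mathcal{I}(T)$-measurability of the power) is handled correctly.
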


\begin{remark*}
The assumption that the polynomials are strongly independent is necessary since even for $\ell=1$ and $p(t)=t^2,$ \eqref{E:r1} typically fails.
\end{remark*}

The previous remark shows that \eqref{E:r1} typically fails even for families of independent, integer polynomials. Hence, Theorem~\ref{T:r1} is another indication that one has to work with real polynomials in order to have nice lower bounds as in \eqref{E:r1} for general systems. 

\medskip

Note at this point that our arguments show that the uniform version of Theorem~\ref{T:ss}, and hence its implications, holds, meaning that one can replace the standard Ces{\`a}ro averages, $\lim_{N\to\infty}\frac{1}{N}\sum_{n=1}^N,$ with the respective uniform ones, $\lim_{N-M\to\infty}\frac{1}{N}\sum_{n=M+1}^{N},$ and the natural upper density, $\bar{d},$  with the respective upper Banach density, $d^\ast$ \footnote{For a set $A\subseteq \Z,$ we define its \emph{upper Banach density}, $d^\ast(A),$ as $ d^\ast(A)=\limsup_{N-M\to \infty}\frac{
|A\cap  \{M+1,\ldots, N \}|}{N-M}. $}.

Then, one has that the uniform version of Theorem~\ref{T:r1} implies that for any $A\in \mathcal{B}$ with $\mu(A)>0,$ and every $\varepsilon>0$ the set $$R_\varepsilon(A)=\Big\{n\in \mathbb{Z}: \;\mu\Big(A\cap T^{-[p_1(n)]}A\cap\ldots\cap T^{-[p_\ell(n)]}A\Big)> (\mu(A))^{\ell+1}-\varepsilon\Big\}$$ is syndetic (i.e., it has bounded gaps).

We note that this general result, which holds under no assumption on the system, implies that a family of real strongly independent polynomials has a way different behavior than a family of linear integer polynomials, since for $p_i(t)=it$ we have that the syndeticity conclusion of the respective $R_\varepsilon(A)$ fails for certain ergodic systems when $\ell\geq 4,$ while for certain non-ergodic systems it fails even when $\ell\geq 2$ (for examples covering both cases, see \cite{BHK}).

\medskip

Using Theorem~\ref{T:r1} and Furstenberg's corresponding principle, we have the following.

\begin{theorem}\label{T:r2}
Let $\ell\in\mathbb{N}$ and $p_1,\ldots,p_\ell\in \mathbb{R}[t]$ be real strongly independent polynomials. Then for every $E\subseteq \mathbb{N}$ we have $$\liminf_{N\to\infty}\frac{1}{N}\sum_{n=1}^N \bar{d}(E\cap (E-[p_1(n)])\cap\ldots\cap (E-[p_\ell(n)]))\geq (\bar{d}(E))^{\ell+1}.$$
\end{theorem}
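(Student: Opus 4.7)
The plan is to deduce Theorem~\ref{T:r2} directly from Theorem~\ref{T:r1} via Furstenberg's correspondence principle. The combinatorial density lower bound should follow by transferring the ergodic recurrence estimate pointwise in $n$, then averaging and passing to $\liminf$.

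To carry this out, I first apply Furstenberg's correspondence principle to $E\subseteq\N$ (viewed as a subset of $\Z$) to produce a system $(X,\mathcal{B},\mu,T)$ and a set $A\in\mathcal{B}$ with $\mu(A)=\bar d(E)$ such that, for every $n_1,\ldots,n_\ell\in\Z$,
\[
\bar d\bigl(E\cap(E-n_1)\cap\ldots\cap(E-n_\ell)\bigr)\geq \mu\bigl(A\cap T^{-n_1}A\cap\ldots\cap T^{-n_\ell}A\bigr).
\]
Since $[p_i(n)]\in\Z$ for every $n\in\N$, specializing to $n_i=[p_i(n)]$, averaging over $n=1,\ldots,N$, and taking $\liminf_{N\to\infty}$ on both sides yields
\[
\liminf_{N\to\infty}\frac{1}{N}\sum_{n=1}^N\bar d\bigl(E\cap(E-[p_1(n)])\cap\ldots\cap(E-[p_\ell(n)])\bigr)\geq \liminf_{N\to\infty}\frac{1}{N}\sum_{n=1}^N\mu\bigl(A\cap T^{-[p_1(n)]}A\cap\ldots\cap T^{-[p_\ell(n)]}A\bigr).
\]
By Theorem~\ref{T:r1} applied to $(X,\mathcal{B},\mu,T)$ and $A$, the right-hand side is at least $(\mu(A))^{\ell+1}=(\bar d(E))^{\ell+1}$, which is the desired bound.

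No delicate obstacle is expected: the argument is essentially formal once Theorem~\ref{T:r1} is in hand. The one point worth flagging is that the system produced by the correspondence principle is a priori non-ergodic, so one must invoke Theorem~\ref{T:r1} in its stated form, which is valid for every system. This general form is itself extracted from the ergodic version (coming from Theorem~\ref{T:ss}) by ergodic decomposition: the $L^2$-limit equals $\int\one_A\cdot(\E(\one_A\mid\mathcal{I}(T)))^\ell\,d\mu = \int(\E(\one_A\mid\mathcal{I}(T)))^{\ell+1}\,d\mu$, and Jensen's inequality applied to the convex map $x\mapsto x^{\ell+1}$ bounds this below by $(\mu(A))^{\ell+1}$.
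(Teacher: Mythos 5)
Your proof is correct and follows exactly the route the paper intends: Furstenberg's correspondence principle applied to $E$, specialization to $n_i=[p_i(n)]$, averaging, and then Theorem~\ref{T:r1} in its general (non-ergodic) form, which the paper likewise obtains from the ergodic case of Theorem~\ref{T:ss} via ergodic decomposition and convexity. Nothing further is needed.
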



Immediate implication of the aforementioned result is the following.

\begin{theorem}\label{T:s11}
Let $\ell\in\mathbb{N}$ and $p_1,\ldots,p_\ell\in \mathbb{R}[t]$ be real strongly independent polynomials. Then every $E\subseteq \mathbb{N}$ with $\bar{d}(E)>0$ contains arithmetic configurations of the form
\begin{equation*}\label{E:s11}
\{m, m+[p_1(n)],m+[p_2(n)],\ldots,m+[p_\ell(n)]\}
\end{equation*}
for some $m\in \mathbb{Z}$ and $n\in\mathbb{N}$ with $[p_i(n)]\neq 0,$ for all $1\leq i\leq \ell.$
\end{theorem}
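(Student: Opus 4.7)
The plan is to deduce this directly from Theorem~\ref{T:r2}, which gives a quantitative lower bound on the average density of intersection sets, and then argue that the triviality condition $[p_i(n)] = 0$ can be avoided on a set of positive density.

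First I would apply Theorem~\ref{T:r2} to $E$, obtaining
\[
\liminf_{N\to\infty}\frac{1}{N}\sum_{n=1}^{N}\bar{d}\bigl(E\cap(E-[p_1(n)])\cap\ldots\cap(E-[p_\ell(n)])\bigr)\;\geq\;(\bar{d}(E))^{\ell+1}\;>\;0,
\]
where the strict positivity uses the hypothesis $\bar{d}(E)>0$.

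Next I would verify that the ``exclusion set'' of $n$'s yielding $[p_i(n)]=0$ for some $i$ is negligible. Since the family $\{p_1,\dots,p_\ell\}$ is strongly independent, each individual $p_i$ (viewed as a trivial length-one linear combination) has a non-constant irrational coefficient; in particular $p_i$ is non-constant, so $|p_i(n)|\to\infty$ as $n\to\infty$. Consequently the set $Z_i=\{n\in\N:[p_i(n)]=0\}$ is finite for each $i$, and so is $Z:=\bigcup_{i=1}^{\ell}Z_i$. Writing $B_n:=\bar{d}\bigl(E\cap(E-[p_1(n)])\cap\cdots\cap(E-[p_\ell(n)])\bigr)$, the contribution of $Z$ to the Cesàro averages tends to zero:
\[
\frac{1}{N}\sum_{n\in Z,\,n\leq N}B_n\;\leq\;\frac{|Z|}{N}\;\xrightarrow[N\to\infty]{}\;0.
\]
Combining with the lower bound above, we get $\liminf_{N\to\infty}\tfrac{1}{N}\sum_{n\leq N,\,n\notin Z}B_n\geq(\bar{d}(E))^{\ell+1}>0$, and in particular there exists (indeed, infinitely many) $n\in\N\setminus Z$ for which $B_n>0$.

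Finally, fix such an $n$. By positivity of upper density the set $E\cap(E-[p_1(n)])\cap\cdots\cap(E-[p_\ell(n)])$ is non-empty, so there exists $m$ lying in this intersection. Then $m,\,m+[p_1(n)],\,\ldots,\,m+[p_\ell(n)]\in E$, and by the choice $n\notin Z$ we have $[p_i(n)]\neq0$ for every $1\leq i\leq\ell$, which is precisely the required arithmetic configuration. I do not expect any substantive obstacle here: the bulk of the work has been done in Theorem~\ref{T:r2}, and the only subtlety is the verification that the non-triviality condition $[p_i(n)]\neq0$ can be imposed, which follows from the strong independence hypothesis forcing each $p_i$ to be non-constant.
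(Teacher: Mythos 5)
Your proposal is correct and follows exactly the route the paper intends: the paper presents Theorem~\ref{T:s11} as an immediate consequence of Theorem~\ref{T:r2}, and your argument simply spells out the two implicit steps (positivity of the Ces\`aro average forces some $n$ with $\bar{d}\bigl(E\cap(E-[p_1(n)])\cap\cdots\cap(E-[p_\ell(n)])\bigr)>0$, and the finitely many $n$ with some $[p_i(n)]=0$ contribute nothing to the average, since strong independence makes each $p_i$ non-constant). No gaps.
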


We note that someone can get the aforementioned result for integer polynomials with no constant term from the polynomial Szemer\'{e}di theorem (\cite{BL}), but in the generality that we present it here it is not clear to us at all if it follows from other already known results.

\medskip


In the next two applications of Theorem~\ref{T:ss} we follow the Subsections 2.3 and 2.4 from \cite{F4} respectively, where we get similar results for sequences of strongly independent polynomials instead of sequences of Hardy functions.

\subsubsection{An application in topological dynamics} Let $(X,T)$ be a (topological) dynamical system, where $(X,d)$ is a compact metric space and $T:X\to X$ an invertible continuous transformation. There exists a left-invariant, by $T,$ Borel measure which gives, in case $T$ is minimal (i.e., $\overline{\{T^n x:\;n\in\mathbb{N}\}}=X$ for all $x\in X,$ hence,  for every $x\in X$ and non-empty open set $U$ the set $\{n\in \mathbb{N}:\;T^nx\in U\}$ is syndetic)  positive
value to every non-empty open set. So, for almost every $x\in X$
and every non-empty open set $U$ we have 
\begin{equation}\label{E:ss1''}
\lim_{N\to\infty}\frac{1}{N}\sum_{n=1}^N {\bf{1}}_U(T^n x)>0.
\end{equation}

Note that from Theorem~\ref{T:ss}, using the ergodic decomposition, it follows that \begin{equation}\label{E:ss1'}
\lim_{N\to\infty}\frac{1}{N}\sum_{n=1}^N T^{[p_1(n)]}f_1\cdot\ldots\cdot T^{[p_\ell(n)]}f_\ell=\prod_{i=1}^\ell \mathbb{E}(f_i\vert\mathcal{I}(T)),
\end{equation}
where the convergence takes place in $L^2(\mu),$ $p_1,\ldots,p_\ell$ are real strongly independent polynomials, $f_1,\ldots,f_\ell\in L^\infty(\mu),$ $\mathcal{I}(T)$ denotes the $\sigma$-algebra of $T$-invariant sets and $\mathbb{E}(f\vert\mathcal{I}(T))$ is the conditional expectation with respect to $\mathcal{I}(T).$

Indeed, if $\mu=\int \mu_t\;d\lambda(t)$ denotes the ergodic decomposition of $\mu,$ it suffices to show that if $\mathbb{E}(f_i\vert \mathcal{I}(T))=0$ for some $i$ then the averages converge to $0.$ Since $\mathbb{E}(f_i\vert \mathcal{I}(T))=0,$ we have that $\int f_i \; d\mu_t=0$ for $\lambda$-a.e. $t.$ By \eqref{E:ss1}, we have that the averages go to $0$ in $L^2(\mu_t)$ for $\lambda$-a.e. $t,$ hence the limit is equal to $0$ in $L^2(\mu).$

 Since $ \mathbb{E}(f_i\vert\mathcal{I}(T))=\lim_{N\to\infty}\frac{1}{N}\sum_{n=1}^N T^n f_i,$ combining \eqref{E:ss1'} with \eqref{E:ss1''},  we
get for almost every $x\in X$ (and hence for a dense set) and every $U_1,\ldots,U_\ell$ from a given countable basis of non-empty open sets that $$\limsup_{N\to\infty}\frac{1}{N}\sum_{n=1}^N {\bf{1}}_{U_1}(T^{[p_1(n)]} x)\cdot\ldots\cdot {\bf{1}}_{U_\ell}(T^{[p_\ell(n)]} x)>0.$$ Using this we get:

\begin{theorem}\label{T:topol}
Let $\ell\in\mathbb{N},$ $p_1,\ldots,p_\ell\in\mathbb{R}[t]$ be real strongly independent polynomials and $(X,T)$ a minimal dynamical system. Then for a residual and $T$-invariant set of $x\in X$ we have 
\begin{equation}\label{E:topol}
\overline{\Big\{\Big(T^{[p_1(n)]}x,\ldots,T^{[p_\ell(n)]}x\Big):\;n\in\mathbb{N}\Big\}}=X\times\cdots\times X.
\end{equation}
\end{theorem}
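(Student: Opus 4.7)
The plan is to derive the theorem from a standard Baire-category argument, using as the analytic input the positive-limsup statement established in the paragraph immediately preceding the theorem (which in turn rests on Theorem~\ref{T:ss} combined with the minimality of $(X,T)$).

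First I would fix a countable basis $\{W_k\}_{k\in\N}$ of non-empty open subsets of $X$, and for each multi-index $\bar{j}=(j_1,\ldots,j_\ell)\in\N^\ell$ put
$$B_{\bar{j}}=\bigcup_{n\in\N}\bigcap_{i=1}^{\ell}T^{-[p_i(n)]}W_{j_i}.$$
Each $B_{\bar{j}}$ is open as a countable union of finite intersections of open sets, and the intersection
$$R:=\bigcap_{\bar{j}\in\N^\ell}B_{\bar{j}}$$
is precisely the set of $x\in X$ whose diagonal orbit $\{(T^{[p_1(n)]}x,\ldots,T^{[p_\ell(n)]}x):n\in\N\}$ meets every basic box $W_{j_1}\times\cdots\times W_{j_\ell}$, i.e., is dense in $X\times\cdots\times X$.

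Next I would show that each $B_{\bar{j}}$ is dense in $X$. Let $\mu$ denote the $T$-invariant Borel probability measure provided at the start of the subsection; minimality of $(X,T)$ forces $\mu$ to give positive mass to every non-empty open subset. The combination of \eqref{E:ss1'} and \eqref{E:ss1''} carried out in the paragraph preceding the theorem shows that, for $\mu$-a.e. $x$, the limsup
$$\limsup_{N\to\infty}\frac{1}{N}\sum_{n=1}^N \one_{W_{j_1}}(T^{[p_1(n)]}x)\cdots\one_{W_{j_\ell}}(T^{[p_\ell(n)]}x)$$
is strictly positive, and since there are only countably many multi-indices $\bar{j}$ this holds simultaneously for all $\bar{j}$ on a single full-measure set. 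Each such $x$ lies in $B_{\bar{j}}$ for every $\bar{j}$, so $\mu(B_{\bar{j}})=1$; its closed complement thus has $\mu$-measure zero and, as every non-empty open set has positive $\mu$-measure, this complement has empty interior. Hence $B_{\bar{j}}$ is open and dense, and Baire's theorem gives that $R$ is residual.

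Finally I would verify that $R$ is $T$-invariant: if $x\in R$ and $k\in\Z$, then the diagonal orbit of $T^k x$ equals the image of the diagonal orbit of $x$ under the self-homeomorphism $T^k\times\cdots\times T^k$ of $X^\ell$, and the image of a dense set under a homeomorphism is dense, so $T^k x\in R$. I do not expect a genuine obstacle in this proof, since the analytic content has already been absorbed into Theorem~\ref{T:ss} and into the paragraph preceding the statement; the only care points worth noting are that the $\mu$-a.e. positivity of the above limsup must be harvested from the $L^2$-convergence in \eqref{E:ss1'} by passing to an a.e.-convergent subsequence, and that a single $\mu$-null set must be chosen to handle all countably many basic boxes at once.
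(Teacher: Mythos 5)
Your proposal is correct and follows essentially the same route as the paper, which derives the theorem directly from the positive-limsup statement in the preceding paragraph (itself a consequence of \eqref{E:ss1'} and \eqref{E:ss1''}) and leaves the Baire-category and invariance bookkeeping implicit, referring to Subsection~2.3 of \cite{F4}. Your write-up simply makes explicit the open dense sets $B_{\bar{j}}$, the use of minimality to convert full measure into density, and the commutation argument for $T$-invariance, all of which are the intended steps.
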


\begin{remark*}
Even for $\ell=1$ examples of minimal
rotations on finite cyclic groups show that if $p\in\mathbb{Z}[t]$ is any polynomial different than $\pm t+d,$ then \eqref{E:topol} may fail for every $x\in X.$
\end{remark*}

Using Zorn's lemma we can easily show that every dynamical system has a minimal subsystem. So, using this and Theorem~\ref{T:topol} we get:

\begin{corollary}\label{C:topol}
Let $\ell\in\mathbb{N},$ $p_1,\ldots,p_\ell\in\mathbb{R}[t]$ be real strongly independent polynomials and $(X,T)$ a dynamical system. Then for a non-empty and $T$-invariant set of $x\in X$ we have 
\begin{equation}\label{E:topol'}
\overline{\Big\{\Big(T^{[p_1(n)]}x,\ldots,T^{[p_\ell(n)]}x\Big):\;n\in\mathbb{N}\Big\}}=\overline{\{T^n x:\;n\in\mathbb{N}\}}\times\cdots\times \overline{\{T^n x:\;n\in\mathbb{N}\}}.
\end{equation}
\end{corollary}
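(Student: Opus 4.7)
The plan is to invoke Zorn's lemma to reduce Corollary~\ref{C:topol} to the minimal case already covered by Theorem~\ref{T:topol}. Concretely, I would consider the collection $\mathcal{F}$ of all non-empty closed $T$-invariant subsets of $X$, ordered by reverse inclusion. Every chain in $\mathcal{F}$ has a lower bound: the intersection of a descending chain of non-empty closed subsets of the compact space $X$ is non-empty by the finite intersection property, and it is clearly closed and $T$-invariant. Thus $\mathcal{F}$ admits a minimal element $Y \subseteq X$, and the restricted system $(Y, T|_Y)$ is then minimal in the sense of the paper (every forward orbit is dense in $Y$).

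Next, I would apply Theorem~\ref{T:topol} to the minimal dynamical system $(Y, T|_Y)$. This yields a residual and $T$-invariant subset $R \subseteq Y$ such that, for every $x \in R$,
\begin{equation*}
\overline{\Big\{\Big(T^{[p_1(n)]}x,\ldots,T^{[p_\ell(n)]}x\Big):\;n\in\mathbb{N}\Big\}} = Y \times \cdots \times Y.
\end{equation*}
Because $(Y, T|_Y)$ is minimal, for each $x \in Y$ we have $\overline{\{T^n x : n \in \mathbb{N}\}} = Y$, so the right-hand side above coincides with $\overline{\{T^n x : n \in \mathbb{N}\}} \times \cdots \times \overline{\{T^n x : n \in \mathbb{N}\}}$, which is exactly the statement of \eqref{E:topol'}.

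It remains to check that the set $R$, viewed as a subset of $X$, meets the conclusion of the corollary: it is non-empty, since $Y$ is a non-empty compact metric space (hence Baire) and residual subsets of non-empty Baire spaces are non-empty; and it is $T$-invariant in $X$ because it is $T$-invariant in $Y$ and $Y$ is itself $T$-invariant. Since there are no quantitative estimates or delicate equidistribution arguments to carry out beyond what Theorem~\ref{T:topol} already provides, I do not anticipate any genuine obstacle; the only mild subtlety is verifying the Zorn hypothesis for $\mathcal{F}$, which is standard.
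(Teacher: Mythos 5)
Your proposal is correct and follows essentially the same route as the paper: the paper likewise obtains a minimal subsystem via Zorn's lemma and then applies Theorem~\ref{T:topol} to it, using minimality to identify the orbit closure with the subsystem itself. Your write-up merely supplies the standard details (verifying the Zorn hypothesis via compactness and checking non-emptiness of the residual set) that the paper leaves implicit.
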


\begin{remark*}
As in the previous remark, examples for $\ell=1$ and $p\in \mathbb{Z}[t]$ with $p(t)\neq \pm t+d$ show that \eqref{E:topol'} typically fails. 
\end{remark*}

\subsubsection{An application in combinatorics} Using Theorem~\ref{T:ss}, we have the following recurrence result (for a proof use a similar argument as in Theorem~2.4 in \cite{F4}):

\begin{theorem}\label{T:rec}
Let $\ell\in\mathbb{N},$ $p_1,\ldots,p_\ell\in\mathbb{R}[t]$ be real strongly independent polynomials, $(X,\mathcal{B},\mu,T)$ a system and $A_0,A_1,\ldots,A_\ell\in \mathcal{B}$ such that $$\mu(A_0\cap T^{k_1}A_1\cap\ldots\cap T^{k_\ell}A_\ell)=\alpha>0$$ for some $k_1,\ldots,k_\ell\in\mathbb{Z}.$ Then $$\lim_{N\to\infty}\frac{1}{N}\sum_{n=1}^N \mu\Big(A_0\cap T^{-[p_1(n)]}A_1\cap\ldots\cap T^{-[p_\ell(n)]}A_\ell\Big)\geq \alpha^{\ell+1}.$$
\end{theorem}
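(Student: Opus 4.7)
The plan is to convert the measure inside the sum into an $L^2$ inner product, apply the non-ergodic form \eqref{E:ss1'} of Theorem~\ref{T:ss} to identify the limit as an integral of conditional expectations, and then use the ergodic decomposition together with Jensen's inequality to compare it with $\alpha^{\ell+1}$.

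First I would rewrite
$$\mu\Big(A_0\cap T^{-[p_1(n)]}A_1\cap\ldots\cap T^{-[p_\ell(n)]}A_\ell\Big)=\int \mathbf{1}_{A_0}\cdot T^{[p_1(n)]}\mathbf{1}_{A_1}\cdots T^{[p_\ell(n)]}\mathbf{1}_{A_\ell}\,d\mu.$$
By \eqref{E:ss1'}, the Ces\`aro average of the products on the right-hand side converges in $L^2(\mu)$ to $\prod_{i=1}^\ell g_i$, where $g_i:=\mathbb{E}(\mathbf{1}_{A_i}\mid\mathcal{I}(T))$. Pairing with the bounded function $\mathbf{1}_{A_0}$ (which is continuous in $L^2$ by Cauchy--Schwarz) and using the tower property of conditional expectation (since $\prod_{i=1}^\ell g_i$ is $\mathcal{I}(T)$-measurable), one obtains
$$\lim_{N\to\infty}\frac{1}{N}\sum_{n=1}^N\mu\Big(A_0\cap T^{-[p_1(n)]}A_1\cap\ldots\cap T^{-[p_\ell(n)]}A_\ell\Big)=\int \prod_{i=0}^\ell g_i\,d\mu,$$
where $g_0:=\mathbb{E}(\mathbf{1}_{A_0}\mid\mathcal{I}(T))$. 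The entire problem is thereby reduced to the inequality $\int \prod_{i=0}^\ell g_i\,d\mu\geq \alpha^{\ell+1}$.

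Next I would prove this inequality using the ergodic decomposition $\mu=\int \mu_\omega\,d\nu(\omega)$, under which $g_i(\omega)=\mu_\omega(A_i)$ for $\nu$-a.e.\ $\omega$. Setting $k_0:=0$ and $B(\omega):=\mu_\omega(A_0\cap T^{k_1}A_1\cap\ldots\cap T^{k_\ell}A_\ell)$, the $T$-invariance of each $\mu_\omega$ gives $\mu_\omega(T^{k_i}A_i)=\mu_\omega(A_i)$, and hence $B(\omega)\leq \mu_\omega(A_i)$ for every $0\leq i\leq \ell$. Multiplying these $\ell+1$ inequalities yields $B(\omega)^{\ell+1}\leq \prod_{i=0}^\ell \mu_\omega(A_i)$. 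Since $\int B(\omega)\,d\nu(\omega)=\alpha$ by hypothesis, Jensen's inequality applied to the convex function $x\mapsto x^{\ell+1}$ gives
$$\alpha^{\ell+1}=\Big(\int B(\omega)\,d\nu(\omega)\Big)^{\ell+1}\leq \int B(\omega)^{\ell+1}\,d\nu(\omega)\leq \int \prod_{i=0}^\ell \mu_\omega(A_i)\,d\nu(\omega)=\int \prod_{i=0}^\ell g_i\,d\mu,$$
which completes the argument.

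There is no genuine obstacle here beyond Theorem~\ref{T:ss}: the only delicate point is the passage from $L^2$-convergence to convergence of the integrated averages, which is resolved by Cauchy--Schwarz and the boundedness of the indicator $\mathbf{1}_{A_0}$. The role of the strong independence hypothesis on the polynomials enters only through the invocation of \eqref{E:ss1'}; once the limit has been identified as $\int\prod_{i=0}^\ell g_i\,d\mu$, the lower bound is a soft inequality that works for arbitrary sets $A_0,\ldots,A_\ell$ satisfying the hypothesis.
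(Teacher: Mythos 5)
Your argument is correct and is essentially the one the paper has in mind: the paper only points to the argument of Theorem~2.4 in \cite{F4}, which proceeds exactly as you do --- identify the limit as $\int\prod_{i=0}^\ell \mathbb{E}(\mathbf{1}_{A_i}\mid\mathcal{I}(T))\,d\mu$ via the non-ergodic form of Theorem~\ref{T:ss}, then bound below on each ergodic component using $\mu_\omega(A_0\cap T^{k_1}A_1\cap\ldots\cap T^{k_\ell}A_\ell)\le\mu_\omega(A_i)$ and convexity. Your write-up fills in the details (tower property, Jensen) cleanly and correctly.
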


 Using this result and a variant of Furstenberg's correspondence principle for several sets $A_i$ (see Proposition~3.3 from \cite{F5}) we get (see the $d=1$ case of Theorem~2.8 from \cite{F4}):
 
\begin{theorem}\label{T:comb}
Let $\ell\in\mathbb{N},$ $p_1,\ldots,p_\ell\in\mathbb{R}[t]$ be real strongly independent polynomials and $E_0,E_1,\ldots,E_\ell\subseteq \mathbb{N}$ that satisfy $$\overline{d}(E_0\cap (E_1+k_1)\cap\ldots\cap (E_\ell+k_\ell))=\alpha>0$$ for some $k_1,\ldots,k_\ell\in\mathbb{Z}.$ Then $$\liminf_{N\to\infty}\frac{1}{N}\sum_{n=1}^N \overline{d}(E_0\cap (E_1-[p_1(n)])\cap\ldots\cap (E_\ell-[p_\ell(n)]))\geq \alpha^{\ell+1}.$$
\end{theorem}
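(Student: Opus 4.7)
The plan is to deduce Theorem~\ref{T:comb} from the measure-theoretic recurrence statement Theorem~\ref{T:rec} via the multi-set version of Furstenberg's correspondence principle, exactly as suggested in the excerpt. The proof is essentially a transfer argument, of the same flavour as the single-set deduction of Theorem~\ref{T:r2} from Theorem~\ref{T:r1}, so the main work is already done at the ergodic level.

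First, I would invoke Proposition~3.3 of \cite{F5} on the sets $E_0,E_1,\ldots,E_\ell$. This yields a system $(X,\mathcal{B},\mu,T)$ together with sets $A_0,A_1,\ldots,A_\ell\in\mathcal{B}$ with the following two properties: (a) the multi-correlation at the ``starting configuration'' is preserved, i.e.\
\[
\mu\bigl(A_0\cap T^{k_1}A_1\cap\cdots\cap T^{k_\ell}A_\ell\bigr)=\alpha,
\]
and (b) for every $m_1,\ldots,m_\ell\in\mathbb{Z}$ the density of the corresponding combinatorial intersection dominates the measure of the dynamical one,
\[
\bar d\bigl(E_0\cap(E_1-m_1)\cap\cdots\cap(E_\ell-m_\ell)\bigr)\geq \mu\bigl(A_0\cap T^{-m_1}A_1\cap\cdots\cap T^{-m_\ell}A_\ell\bigr).
\]

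Next, I would apply Theorem~\ref{T:rec} to the system and sets produced in the previous step, with the same integers $k_1,\ldots,k_\ell$ and the same $\alpha$ given by the hypothesis of Theorem~\ref{T:comb}. Since the hypothesis of Theorem~\ref{T:rec} is precisely property (a) above, the conclusion gives
\[
\lim_{N\to\infty}\frac1N\sum_{n=1}^{N}\mu\bigl(A_0\cap T^{-[p_1(n)]}A_1\cap\cdots\cap T^{-[p_\ell(n)]}A_\ell\bigr)\geq \alpha^{\ell+1}.
\]

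Finally, I would apply inequality (b) pointwise in $n$, with $m_i=[p_i(n)]$, sum over $n\le N$, divide by $N$, and take $\liminf_{N\to\infty}$. Since liminf is superadditive under nonnegative summands and the measure-theoretic average actually converges, the left-hand combinatorial $\liminf$ is bounded below by the right-hand measure-theoretic limit, giving the desired lower bound $\alpha^{\ell+1}$. The only step that needs a little care is the bookkeeping of signs: the hypothesis is stated with shifts $E_i+k_i$ while the conclusion uses shifts $E_i-[p_i(n)]$, so one must make sure that Proposition~3.3 of \cite{F5} is applied so that the correspondence produced handles both kinds of translates simultaneously (this is exactly the point of that proposition, which upgrades the single-set principle to a multi-set one). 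Apart from this sign bookkeeping, the argument is entirely routine, and I expect no serious obstacle.
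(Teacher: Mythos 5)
Your proposal is correct and follows essentially the same route as the paper: apply the multi-set correspondence principle (Proposition~3.3 of \cite{F5}) to transfer to a system with sets $A_0,\ldots,A_\ell$, invoke Theorem~\ref{T:rec}, and transfer back. The only point the paper makes explicit that you leave implicit is that one must first fix a sequence of intervals along which the upper density $\alpha$ of the hypothesis configuration is attained (and pass to a subsequence admitting correlations), so that the correspondence gives your property (a) exactly and property (b) via $\bar d\geq d_{\mathbf N}$.
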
 

We will sketch the proof of this result. In order to do so we recall a definition from \cite{F5}:

\begin{definition*}[\cite{F5}]
We say that the sequences $a_1,\ldots,a_\ell\in \ell^\infty(\mathbb{Z})$ \emph{admit correlations along
the sequence of intervals} $([1,N_k])_{k}$ with $N_k\to\infty$ as $k\to\infty,$ if the limit $$\lim_{k\to\infty} \frac{1}{N_k}\sum_{n=1}^{N_k}b_1(n+m_1)\cdot\ldots\cdot b_s(n+m_s)$$ exists for every $s\in\mathbb{N},$ $m_1,\ldots,m_s\in\mathbb{Z}$ and all sequences $b_1,\ldots,b_s\in \{a_1,\ldots,a_\ell, \overline{a}_1,\ldots,\overline{a}_\ell\}.$
\end{definition*}

We remark that for $a_1,\ldots,a_\ell\in \ell^\infty(\mathbb{Z}),$ using a diagonal argument, for any sequence $(N_k)_k\subseteq \mathbb{N}$ with $N_k\to\infty$ as $k\to\infty,$ we can find a subsequence $(N'_k)_k$ such that  $a_1,\ldots,a_\ell$ admit correlations along
the sequence of intervals $([1,N'_k])_{k}.$

\begin{proof}[Proof of Theorem~\ref{T:comb}]
Find a sequence of intervals ${\bf{N}}:=([1,N_k])_k$ along which the upper density of the intersection of the assumption is attained. Let $d_{{\bf{N}}}$ denote the corresponding density.  Passing to a subsequence, if needed, which we denote again by $([1,N_k])_k,$ we can assume that the functions ${\bf{1}}_{E_0},\ldots,{\bf{1}}_{E_\ell}$ admit correlations along
the sequence $([1,N_k])_k.$ Using Proposition~3.3 from \cite{F5}, we have that there exists a system $(X,\mathcal{B},\mu,T)$ and sets $A_0,\ldots, A_\ell\in \mathcal{B}$ such that  $$d_{{\bf{N}}}(E_0\cap(E_1-m_1)\cap\ldots\cap(E_\ell-m_\ell))=\mu(A_0\cap T^{m_1}A_1\cap\ldots\cap T^{m_\ell}A_\ell)$$ for all $m_1,\ldots,m_\ell\in \mathbb{Z}.$ The result now follows by Theorem~\ref{T:rec}.
\end{proof}

This result can be applied to several syndetic sets $E_0,E_1,\ldots,E_\ell\subseteq \mathbb{N}$ with constant $\alpha=\Big(\prod_{i=0}^\ell r_i \Big)^{-1},$ where $r_i$ is the syndeticity constant of $E_i,$ $0\leq i\leq\ell.$ So, one immediately gets the following:

\begin{corollary}\label{C:comb}
Let $\ell\in\mathbb{N},$ $p_1,\ldots,p_\ell\in\mathbb{R}[t]$ be real strongly independent polynomials and $E_0,E_1,\ldots,E_\ell\subseteq \mathbb{N}$ be syndetic sets. Then there exists $m,n\in\mathbb{N}$ such that $$m\in E_0,\; m+[p_1(n)]\in E_1,\;\ldots,\; m+[p_\ell(n)]\in E_\ell.$$
\end{corollary}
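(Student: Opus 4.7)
My plan is to verify the hypothesis of Theorem~\ref{T:comb} for a suitable choice of shifts $k_1,\ldots,k_\ell$ by a short pigeonhole argument based on syndeticity, and then to read off the desired configuration from the strictly positive lower bound that Theorem~\ref{T:comb} delivers.

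First, for each $0\le i\le\ell$ I would fix a syndeticity constant $r_i\in\mathbb{N}$ of $E_i$, so that every interval of length $r_i$ in $\mathbb{N}$ meets $E_i$. Then for each $n\in\mathbb{N}$ and each $i$ I can choose some $t_i(n)\in\{0,1,\ldots,r_i-1\}$ with $n+t_i(n)\in E_i$. The assignment $n\mapsto(t_0(n),\ldots,t_\ell(n))$ takes values in a set of size $R:=\prod_{i=0}^{\ell}r_i$, so $\mathbb{N}$ is partitioned into at most $R$ fibres
\[
S_{(s_0,\ldots,s_\ell)}=\bigl\{n\in\mathbb{N}:\,n+s_i\in E_i\text{ for every }0\le i\le\ell\bigr\}.
\]
By finite subadditivity of $\bar d$, some fibre $S_{(s_0,\ldots,s_\ell)}$ satisfies $\bar d(S_{(s_0,\ldots,s_\ell)})\ge 1/R$. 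Writing $S_{(s_0,\ldots,s_\ell)}=\bigcap_{i=0}^{\ell}(E_i-s_i)$ and translating by $s_0$ (an operation that preserves $\bar d$) yields
\[
\bar d\bigl(E_0\cap (E_1+k_1)\cap\cdots\cap(E_\ell+k_\ell)\bigr)\;\ge\;\alpha:=\frac{1}{R}>0,
\]
with $k_i:=s_0-s_i\in\mathbb{Z}$, which is exactly the positive-density hypothesis demanded by Theorem~\ref{T:comb}.

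Having produced such a configuration, I would apply Theorem~\ref{T:comb} directly to obtain
\[
\liminf_{N\to\infty}\frac{1}{N}\sum_{n=1}^N\bar d\bigl(E_0\cap (E_1-[p_1(n)])\cap\cdots\cap(E_\ell-[p_\ell(n)])\bigr)\;\ge\;\alpha^{\ell+1}>0.
\]
Consequently at least one $n\in\mathbb{N}$ contributes a strictly positive summand, so the corresponding intersection $E_0\cap(E_1-[p_1(n)])\cap\cdots\cap(E_\ell-[p_\ell(n)])$ is non-empty; picking any $m$ in it gives $m\in E_0$ and $m+[p_i(n)]\in E_i$ for every $1\le i\le\ell$, as required. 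There is no real obstacle: all of the ergodic-theoretic content sits in Theorem~\ref{T:comb}, and the only genuine step here is the elementary pigeonhole reduction singled out in the paragraph preceding the corollary, together with a minor bookkeeping alignment between the shifts $+k_i$ entering the hypothesis of Theorem~\ref{T:comb} and the shifts $-[p_i(n)]$ appearing inside its summands.
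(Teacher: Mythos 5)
Your proposal is correct and follows essentially the same route as the paper, which obtains the hypothesis of Theorem~\ref{T:comb} for syndetic sets with the constant $\alpha=\bigl(\prod_{i=0}^{\ell}r_i\bigr)^{-1}$ and then reads off the nonempty intersection from the positive lower bound. The only difference is that you spell out the pigeonhole argument that the paper leaves implicit in a single sentence; the details you supply (subadditivity of $\bar d$ over the $R$ fibres and the translation by $s_0$) are exactly the right ones.
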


Via this last result, for a syndetic set $E\subseteq \mathbb{N},$ $p_1,\ldots,p_\ell\in\mathbb{R}[t]$ real strongly independent polynomials  and $c_0,c_1,\ldots,c_\ell\in \mathbb{N},$ setting $E_i=c_i E$\footnote{Where $cE:=\{cn:\;n\in E\}$.}, $0\leq i\leq \ell,$   we can find $x_0,x_1,\ldots,x_\ell\in E$ and $n\in \mathbb{N},$ solution of the following system of equations:
\begin{eqnarray*}
c_1x_1-c_0x_0 & = & [p_1(n)]\\
c_2x_2-c_0x_0 & = & [p_2(n)]\\
               & \vdots &\\
               c_\ell x_\ell -c_0x_0 & = & [p_\ell(n)].
\end{eqnarray*}

Let us note at this point that similar results fail even for $\ell=1,$ i.e., a single polynomial sequence and also fail when the set $E$ is only assumed to be piecewise
syndetic. Easy examples show that if $p\in \mathbb{Z}[t]$ is any polynomial different than $\pm t+d$ and $k\in \mathbb{N}\setminus\{1\},$ then the equation $kx-y=p(n)$ has no solution with $x, y$ belonging in some set $E$ that is an arithmetic
progression.

\subsection{Convergence along primes}

\medskip

Using Theorems~\ref{T:mc} (see below), ~\ref{T:ss} and some results from \cite{FHK2}, \cite{FHK} and \cite{K2}, we can have integer part polynomial multiple convergence along primes to the "right" limit for a strongly independent polynomial family. 

\subsubsection{Single sequence} The next result informs us that the limit of the ergodic averages with integer part of real polynomial iterates of a single sequence, is equal to the limit of the "Furstenberg averages" and it follows by Theorem~2.2 in \cite{F1}.

\begin{theorem}[\cite{F1}]\label{T:mc}
Let $p\in \mathbb{R}[t]$  with $p(t)\neq cq(t)+d,$ for all $c, d\in \mathbb{R}$ and $q\in \mathbb{Q}[t].$ Then for every $\ell\in\mathbb{N},$ system $(X,\mathcal{B},\mu,T)$ and $f_1,
\ldots,f_\ell\in L^\infty(\mu),$ we have \begin{equation}\label{E:mc3}
\lim_{N\to\infty}\frac{1}{N}\sum_{n=1}^N T^{[p(n)]}f_1\cdot T^{2[p(n)]}f_2\cdot\ldots\cdot T^{\ell[p(n)]}f_\ell=\lim_{N\to\infty}\frac{1}{N}\sum_{n=1}^N T^{n}f_1\cdot T^{2n}f_2\cdot\ldots\cdot T^{\ell n}f_\ell,
\end{equation} where the convergence takes place in $L^2(\mu).$
\end{theorem}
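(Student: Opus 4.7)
The plan is to invoke Theorem~2.2 of \cite{F1}, where Frantzikinakis proves the analogous equality of limits in the broader context of logarithmico-exponential Hardy field functions whose polynomial part is not of the form $cq+d$ for $c,d\in\mathbb{R}$ and $q\in\mathbb{Q}[t]$. Every $p\in\mathbb{R}[t]$ satisfying the hypothesis of Theorem~\ref{T:mc} lies in this class --- the polynomial case being precisely the specialization to sequences with integer-polynomial growth rate --- so the desired conclusion follows directly from that general result.

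For a self-contained derivation, I would follow Frantzikinakis' approach. First, via the Host--Kra structure theorem (Theorem~\ref{T:HK}) together with a characteristic-factor statement for both sides of \eqref{E:mc3}, reduce to the case of an ergodic nilsystem. The right-hand side is handled by the original Host--Kra result from \cite{HK99}; for the left-hand side one needs the analog of Lemma~\ref{L:4.7} applied to the family $\{i[p(n)]\}_{i=1}^\ell$, provable by a PET/van der Corput induction combined with uniform equidistribution of $[p(n)]$ modulo each fixed integer. Once we are on a nilsystem $(G/\Gamma,T_\tau)$, both averages in \eqref{E:mc3} become $L^2$-integrals of continuous functions along the polynomial nilorbits $\bigl(\tau^{[ip(n)]}x\bigr)_{i=1}^\ell$ and $\bigl(\tau^{in}x\bigr)_{i=1}^\ell$ in $(G/\Gamma)^\ell$.

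The core step is then an equidistribution comparison, carried out by the method of \cite{F1, F2}: embed $G$ into a connected and simply connected ambient nilpotent Lie group, so that real powers $\tau^s$ are well-defined, and write $\tau^{i[p(n)]}=\tau^{ip(n)}\cdot\tau^{-\{ip(n)\}}$ for each $i=1,\ldots,\ell$. Then invoke Leibman's equidistribution theorem (Theorem~\ref{T:t1}) jointly for the nilorbit $\bigl(\tau^{ip(n)}\bigr)_{i=1}^\ell$ and the torus orbit $\bigl(\{ip(n)\}\bigr)_{i=1}^\ell$ in a product nilmanifold that sees both components simultaneously. The hypothesis $p\neq cq+d$ feeds Weyl's criterion and Leibman's criterion at once, ensuring that the closure of the perturbed orbit in $(G/\Gamma)^\ell$ coincides with the closure of $\bigl(\tau^{in}x\bigr)_{i=1}^\ell$ and that the corresponding Haar integrals match, which is exactly the asserted equality of limits.

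The main obstacle is precisely this joint equidistribution, since the fractional-part correction $\tau^{-\{ip(n)\}}$ cannot be decoupled from the real nilorbit $\tau^{ip(n)}$ --- both depend on $n$ through the same polynomial $p$. Handling this coupling is where the hypothesis $p(t)\neq cq(t)+d$ plays a genuinely non-trivial role: it forces the non-constant coefficients of $p$ to span at least a two-dimensional $\mathbb{Q}$-subspace of $\mathbb{R}$, which is exactly what is needed for Leibman's theorem to produce full equidistribution on the enlarged nilmanifold. Once that joint equidistribution is in hand, the identity in \eqref{E:mc3} reduces to comparing two Haar integrals over the same orbit-closure and is immediate.
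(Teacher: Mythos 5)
Your proof takes the same route as the paper, which offers no independent argument for Theorem~\ref{T:mc} and simply records that it follows from Theorem~2.2 of \cite{F1}; as you note, a real polynomial $p$ with $p(t)\neq cq(t)+d$ for all $c,d\in\mathbb{R}$ and $q\in\mathbb{Q}[t]$ falls squarely within the hypotheses of that theorem, since $p(t)-c\tilde{q}(t)$ is then a non-constant polynomial for every $c\in\mathbb{R}$ and $\tilde{q}\in\mathbb{Z}[t]$ and hence grows faster than $\log t$. Your supplementary sketch is consistent with the strategy of \cite{F1} and \cite{F2}, modulo the harmless slip that the fractional-part correction for $T^{i[p(n)]}$ should be $\tau^{-i\{p(n)\}}$ rather than $\tau^{-\{ip(n)\}}$.
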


This theorem, via Furstenberg's correspondence principle, immediately implies the following Szemer{\'e}di type result:

\begin{theorem}[\cite{F1}]\label{T:s1}
Let $p\in\mathbb{R}[t]$ with $p(t)\neq cq(t)+d,$ where $c, d\in\mathbb{R}$ and $q\in \mathbb{Q}[t].$ Then for every $\ell\in\mathbb{N},$ every $E\subseteq \mathbb{N}$ with $\bar{\mathrm{d}}(E)>0$ contains arithmetic progressions of the form
\begin{equation*}\label{E:s1}
\{m, m+[p(n)],m+2[p(n)],\ldots,m+\ell[p(n)]\}
\end{equation*}
for some $m\in \mathbb{Z}$ and $n\in\mathbb{N}$ with $[p(n)]\neq 0.$
\end{theorem}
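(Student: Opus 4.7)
The plan is to combine Theorem~\ref{T:mc} with Furstenberg's correspondence principle and the classical Furstenberg multiple recurrence theorem (the ergodic-theoretic incarnation of Szemer\'edi's theorem). First, I would apply Furstenberg's correspondence principle to the set $E$ to obtain a system $(X,\mathcal{B},\mu,T)$ and a set $A\in\mathcal{B}$ with $\mu(A)=\bar{d}(E)>0$ such that
$$\bar{d}\big(E\cap (E-n_1)\cap\ldots\cap (E-n_\ell)\big)\geq \mu\big(A\cap T^{-n_1}A\cap\ldots\cap T^{-n_\ell}A\big)$$
for every $n_1,\ldots,n_\ell\in\Z$.

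Next I would invoke Theorem~\ref{T:mc} with $f_1=\cdots=f_\ell=\mathbf{1}_A$. Since convergence in $L^2(\mu)$ implies convergence of the inner products with $\mathbf{1}_A$, Cesàro-averaging the identity \eqref{E:mc3} against $\mathbf{1}_A$ yields
$$\lim_{N\to\infty}\frac{1}{N}\sum_{n=1}^N \mu\big(A\cap T^{-[p(n)]}A\cap\ldots\cap T^{-\ell[p(n)]}A\big)=\lim_{N\to\infty}\frac{1}{N}\sum_{n=1}^N \mu\big(A\cap T^{-n}A\cap\ldots\cap T^{-\ell n}A\big).$$
By Furstenberg's multiple recurrence theorem, the right-hand side is strictly positive, hence so is the left. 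In particular, there exist infinitely many $n\in\N$ for which $\mu\big(A\cap T^{-[p(n)]}A\cap\ldots\cap T^{-\ell[p(n)]}A\big)>0$, and Furstenberg's correspondence principle then forces $\bar{d}\big(E\cap(E-[p(n)])\cap\ldots\cap(E-\ell[p(n)])\big)>0$. Picking any $m$ in this (non-empty) intersection produces the desired configuration $\{m,m+[p(n)],\ldots,m+\ell[p(n)]\}\subseteq E$.

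The only thing left to guarantee is that one may choose such an $n$ with $[p(n)]\neq 0$. But the hypothesis $p(t)\neq cq(t)+d$ (taking $c=0$) forces $p$ to be non-constant, so $|p(n)|\to\infty$, and the set $\{n\in\N:[p(n)]=0\}$ is finite. Since we have infinitely many valid $n$, all but finitely many of them will satisfy $[p(n)]\neq 0$.

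There is no real obstacle here: the work has already been done in Theorem~\ref{T:mc}, and the mild nuisance of ruling out $[p(n)]=0$ is dispatched by the growth of $p$. The only point that requires a line of justification is the passage from $L^2$-convergence in \eqref{E:mc3} to convergence of the measures of the intersections, which is immediate by pairing with $\mathbf{1}_A$ via Cauchy--Schwarz.
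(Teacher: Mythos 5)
Your argument is correct and is exactly the route the paper intends: it derives Theorem~\ref{T:s1} from Theorem~\ref{T:mc} by pairing with $\mathbf{1}_A$, invoking Furstenberg's multiple recurrence theorem for the linear averages, and transferring back via the correspondence principle, with the observation that $p$ non-constant rules out $[p(n)]=0$ for all but finitely many $n$. The paper merely asserts this implication without writing it out, so your proposal simply supplies the standard details.
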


We will show the respective versions of these two last results along primes.

\begin{theorem}\label{T:ssp1}
Let $q\in \mathbb{R}[t]$  with $q(t)\neq c\tilde{q}(t)+d$ for all $c,d \in \mathbb{R}$ and $\tilde{q}\in \mathbb{Q}[t].$ Then for every  $\ell \in \N,$ system $(X,\mathcal{B},\mu,T)$ and $f_1,\ldots,f_\ell\in L^\infty(\mu),$ we have that 
\begin{equation*}
\lim_{N\to\infty}\frac{1}{\pi(N)}\sum_{p\in\P\cap [1,N]} T^{[q(p)]}f_1\cdot  T^{2[q(p)]}f_2\cdot\ldots\cdot T^{\ell[q(p)]}f_\ell=\lim_{N\to\infty}\frac{1}{N}\sum_{n=1}^N T^{n}f_1\cdot  T^{2n}f_2\cdot\ldots\cdot T^{\ell n}f_\ell,
  \end{equation*}
  where the convergence takes place in $L^2(\mu)$ and $\pi(N)=|\P\cap [1,N]|$ denotes the number of primes up to $N.$
\end{theorem}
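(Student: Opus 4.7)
The plan is to split the desired equality into two stages. By Theorem~\ref{T:mc},
$$\lim_{N\to\infty}\frac{1}{N}\sum_{n=1}^N T^{n}f_1\cdot T^{2n}f_2\cdots T^{\ell n}f_\ell=\lim_{N\to\infty}\frac{1}{N}\sum_{n=1}^N T^{[q(n)]}f_1\cdot T^{2[q(n)]}f_2\cdots T^{\ell[q(n)]}f_\ell,$$
so the problem reduces to the prime-to-Ces{\`a}ro transference
$$\lim_{N\to\infty}\frac{1}{\pi(N)}\sum_{p\in\P\cap[1,N]}\prod_{i=1}^\ell T^{i[q(p)]}f_i=\lim_{N\to\infty}\frac{1}{N}\sum_{n=1}^N\prod_{i=1}^\ell T^{i[q(n)]}f_i.$$
In other words, Theorem~\ref{T:mc} does the ``remove-the-integer-part'' half of the theorem on the integer side, and it remains to pass from $\N$ to $\P$ for the integer-part averages themselves.

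For that transference, I would first reduce to a nilsystem. Using the characteristic-factor statement of Lemma~\ref{L:4.7} together with the Host--Kra structure theorem (Theorem~\ref{T:HK}), it is enough to treat the case when $(X,\mathcal{B},\mu,T)$ is an ergodic nilsystem $X=G/\Gamma$ with $Tx=\tau x$ for some $\tau\in G$, and when $f_1,\ldots,f_\ell$ are Lipschitz. On such a system the integrand $\prod_{i=1}^\ell f_i(\tau^{i[q(n)]}x)$ can be written as a Lipschitz function of a genuine polynomial sequence on an extended nilmanifold whose additional torus coordinate records the fractional part $\{q(n)\}$; this is the device of \cite{K2} for disposing of the integer-part function. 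The hypothesis that $q(t)\neq c\tilde q(t)+d$ for every $c,d\in\R$ and $\tilde q\in\Q[t]$ is exactly what guarantees that the resulting polynomial orbit is equidistributed on the extended nilmanifold.

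The final step invokes the Green--Tao--Ziegler M{\"o}bius--nilsequence discorrelation theorem, as packaged for ergodic applications in \cite{FHK,FHK2}: for Lipschitz $F$ on a nilmanifold and a polynomial sequence $g(\cdot)$ with the required equidistribution, the prime average $\frac{1}{\pi(N)}\sum_{p\le N}F(g(p)\Gamma)$ and the Ces{\`a}ro average $\frac{1}{N}\sum_{n\le N}F(g(n)\Gamma)$ share the same limit. Applying this to the extended polynomial orbit from the previous paragraph yields the desired identification, and composition with Theorem~\ref{T:mc} closes the argument.

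The main obstacle is precisely the transference step in the presence of the integer part: one needs Möbius-discorrelation estimates against sequences of the form $F(\tau^{[q(n)]}x)$, which are not themselves polynomial nilsequences. The extended-nilmanifold trick from \cite{K2} is what reduces this to the already-available estimates for honest polynomial sequences on nilmanifolds, and the strong-independence-type hypothesis on $q$ is what rules out the rational major-arc obstructions that would otherwise spoil the equidistribution on both the nilmanifold factor and the auxiliary torus factor.
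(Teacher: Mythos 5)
Your overall strategy is sound and rests on the same deep input as the paper's proof (the Green--Tao machinery entering through the Gowers uniformity of the modified von Mangoldt function), but it is organized quite differently. The paper never reduces to nilsystems in this proof: it passes from the prime average to the $\Lambda'$-weighted average via Lemma~\ref{L:n}, applies the $W$-trick, invokes Proposition~\ref{P:1K2} as a black box to discard the weight $\Lambda'_{w,r}-1$ directly at the level of $L^2(\mu)$ averages on an arbitrary system (that proposition already absorbs the integer-part difficulty), and then applies Theorem~\ref{T:mc} to each progression average $\frac{1}{N}\sum_{n\le N}\prod_{i} T^{i[q(Wn+r)]}f_i$, using the observation that $q(Wt+r)$ still fails to lie in $c\,\Q[t]+d$. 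Your route --- identify the Ces\`{a}ro limit first via Theorem~\ref{T:mc}, then prove a prime-to-Ces\`{a}ro transference by descending to nilsystems, encoding $[q(n)]$ on an extended nilmanifold, and quoting nilsequence discorrelation --- would in effect re-derive the special case of Proposition~\ref{P:1K2} that is needed; it is more self-contained at the nilmanifold level but longer, and it correctly isolates the role of the hypothesis on $q$ in killing the rational obstructions that would otherwise make the prime and Ces\`{a}ro averages disagree.

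One step as written does not go through: you cannot reduce the \emph{prime} average to the nilfactor using Lemma~\ref{L:4.7} and Theorem~\ref{T:HK} alone. Lemma~\ref{L:4.7} permits only \emph{bounded} weights $c_{N,n}$, whereas after Lemma~\ref{L:n} the prime average carries the unbounded weight $\Lambda'(n)$, of size about $\log n$; taking $c_{N,n}=\Lambda'(n)/\log N$ only yields that $\frac{1}{N}\sum_{n=1}^N\Lambda'(n)\,T^{[q(n)]}f_1\cdots T^{\ell[q(n)]}f_\ell$ is $o(\log N)$ in $L^2(\mu)$, which is useless. The standard repair --- and the reason one cannot avoid the estimates behind Proposition~\ref{P:1K2} (ultimately \cite{GT}) even for the characteristic-factor step --- is to first replace $\Lambda'$ by $\Lambda'_{w,r}$ on progressions, split off $\Lambda'_{w,r}-1$ (negligible by Gowers uniformity), and treat the remaining unweighted progression averages with Lemma~\ref{L:4.7} applied to the nice collection $\{q(Wt+r),\ldots,\ell\, q(Wt+r)\}$. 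Since you invoke the same machinery later anyway, this is a fixable ordering issue rather than a fatal one, but the reduction to nilsystems must be justified this way rather than by Lemma~\ref{L:4.7} alone.
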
 


\begin{theorem}\label{T:s1p}
Let $q\in\mathbb{R}[t]$ with $q(t)\neq c\tilde{q}(t)+d,$ where $c, d\in\mathbb{R}$ and $\tilde{q}\in \mathbb{Q}[t].$ Then for every $\ell\in\mathbb{N},$ every $E\subseteq \mathbb{N}$ with $\bar{d}(E)>0$ contains arithmetic progressions of the form
\begin{equation*}\label{E:s1p}
\{m, m+[q(p)],m+2[q(p)],\ldots,m+\ell[q(p)]\}
\end{equation*}
for some $m\in \mathbb{Z}$ and $p\in\mathbb{P}$ with $[q(p)]\neq 0.$
\end{theorem}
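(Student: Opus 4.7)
The strategy is to reduce Theorem~\ref{T:s1p} to Theorem~\ref{T:ssp1} via Furstenberg's correspondence principle, mirroring exactly the derivation of Theorem~\ref{T:s1} from Theorem~\ref{T:mc}.

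Given $E\subseteq\mathbb{N}$ with $\bar{d}(E)>0$, the first step is to invoke Furstenberg's correspondence principle to produce a system $(X,\mathcal{B},\mu,T)$ and a set $A\in\mathcal{B}$ with $\mu(A)=\bar{d}(E)>0$ satisfying
$$\bar{d}\bigl(E\cap(E-n_1)\cap\cdots\cap(E-n_\ell)\bigr)\geq \mu\bigl(A\cap T^{-n_1}A\cap\cdots\cap T^{-n_\ell}A\bigr)$$
for all $n_1,\ldots,n_\ell\in\mathbb{Z}$.

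Next, I apply Theorem~\ref{T:ssp1} with $f_1=f_2=\cdots=f_\ell=\mathbf{1}_A$. Taking the inner product in $L^2(\mu)$ with $\mathbf{1}_A$ on both sides of the identity provided by that theorem yields
$$\lim_{N\to\infty}\frac{1}{\pi(N)}\sum_{p\in\mathbb{P}\cap[1,N]}\mu\bigl(A\cap T^{-[q(p)]}A\cap\cdots\cap T^{-\ell[q(p)]}A\bigr)=\lim_{N\to\infty}\frac{1}{N}\sum_{n=1}^N\mu\bigl(A\cap T^{-n}A\cap\cdots\cap T^{-\ell n}A\bigr).$$
By Furstenberg's multiple recurrence theorem (i.e., the ergodic Szemer\'edi theorem), the right-hand side is strictly positive. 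Hence the average on the left is strictly positive, so there exist infinitely many primes $p$ for which $\mu\bigl(A\cap T^{-[q(p)]}A\cap\cdots\cap T^{-\ell[q(p)]}A\bigr)>0$.

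For any such $p$, the correspondence inequality gives $\bar{d}\bigl(E\cap(E-[q(p)])\cap\cdots\cap(E-\ell[q(p)])\bigr)>0$, so that intersection is non-empty and contains some $m\in\mathbb{Z}$, producing the arithmetic progression $\{m,m+[q(p)],\ldots,m+\ell[q(p)]\}\subseteq E$. To guarantee $[q(p)]\neq 0$, I note that the hypothesis on $q$ forces $q$ to be non-constant (a constant value $d$ would trivially have the form $0\cdot\tilde{q}(t)+d$ with $\tilde{q}\in\mathbb{Q}[t]$), so $|q(p)|\to\infty$ as $p\to\infty$ and $[q(p)]\neq 0$ for all but finitely many primes $p$; such primes may be discarded without affecting the positivity of the prime average. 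There is no genuine obstacle in this argument: the substantive difficulty is concentrated in Theorem~\ref{T:ssp1}, and Theorem~\ref{T:s1p} is a direct Szemer\'edi-type corollary obtained by specializing all the functions to $\mathbf{1}_A$.
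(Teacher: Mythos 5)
Your proposal is correct and follows exactly the route the paper intends: Theorem~\ref{T:s1p} is derived from Theorem~\ref{T:ssp1} via Furstenberg's correspondence principle with all functions specialized to $\mathbf{1}_A$, with positivity of the right-hand side supplied by Furstenberg's multiple recurrence theorem, precisely mirroring how Theorem~\ref{T:s1} follows from Theorem~\ref{T:mc}. The handling of the $[q(p)]\neq 0$ condition by discarding the finitely many primes where $q$ is small is also the standard (and correct) way to finish.
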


\subsubsection{Several sequences} We also get the respective result and its applications of Theorem~\ref{T:ss} along primes:

\begin{theorem}\label{T:ssp}
Let $\ell \in \N,$ $p_1,\ldots,p_\ell\in \mathbb{R}[t]$ be real strongly independent polynomials, $(X,\mathcal{B},\mu,T)$ be an ergodic system and $f_1,\ldots,f_\ell\in L^\infty(\mu).$ Then \begin{equation*}\label{E:ssp}
\lim_{N\to\infty}\frac{1}{\pi(N)}\sum_{p\in\P\cap [1,N]} T^{[p_1(p)]}f_1\cdot\ldots\cdot T^{[p_\ell(p)]}f_\ell=\prod_{i=1}^\ell \int f_i\;d\mu,
\end{equation*}
  where the convergence takes place in $L^2(\mu).$ 
\end{theorem}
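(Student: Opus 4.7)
The plan is to reduce the prime averages in Theorem~\ref{T:ssp} to the Cesàro averages of Theorem~\ref{T:ss} via the characteristic-factor machinery and a prime-transference result for integer-part polynomial orbits on nilmanifolds.

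First, I would invoke Lemma~\ref{L:4.7} to conclude that a Host-Kra nilfactor of step depending only on $\ell$ and $\max_i \deg p_i$ is characteristic for the family of iterates $([p_1(n)],\ldots,[p_\ell(n)])$; since this property depends on the family of iterates rather than on the averaging measure, it applies equally to the prime-weighted averages in the statement. By the Host-Kra Structure Theorem (Theorem~\ref{T:HK}), the problem reduces to the case where $X = G/\Gamma$ is a nilsystem (with $G$ connected and simply connected), $T$ is translation by some $a\in G$, and each $f_i$ is replaced by a continuous function $F_i$ on the nilmanifold.

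On such a nilsystem, the key step is to establish the equality of $L^2$-limits
\[
\lim_{N\to\infty}\frac{1}{\pi(N)}\sum_{p\in\P\cap[1,N]} \prod_{i=1}^\ell T^{[p_i(p)]}F_i \;=\; \lim_{N\to\infty}\frac{1}{N}\sum_{n=1}^N \prod_{i=1}^\ell T^{[p_i(n)]}F_i.
\]
I would establish this via the transference arguments of \cite{K2}: writing $[p_i(n)] = p_i(n) - \{p_i(n)\}$ and expanding the fractional part via a Fourier series (with controlled tails), each iterate lifts to a combination of genuine polynomial sequences on a larger nilmanifold, to which the Green-Tao-Ziegler equidistribution of polynomial orbits at primes applies. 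This gives that the prime-weighted and Cesàro averages of the lifted sequences share the same limit, and the same then follows for the original averages after summing the Fourier expansion back.

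Finally, since the nilsystem inherits ergodicity from $(X,\mathcal{B},\mu,T)$, Theorem~\ref{T:ss} identifies this common limit as $\prod_{i=1}^\ell \int F_i\,d\mu = \prod_{i=1}^\ell \int f_i\,d\mu$, concluding the proof. The main obstacle is the transference step: one must ensure that the uniformity in the Green-Tao-Ziegler orthogonality of $\Lambda-1$ to nilsequences is strong enough to absorb the error introduced by truncating the Fourier expansion of the integer part. This is precisely the content of the prime-average results proved in \cite{K2}, so the present argument essentially amounts to invoking them in the correct configuration together with Theorem~\ref{T:ss}.
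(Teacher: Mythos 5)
Your overall strategy (transfer from primes to integers using Green--Tao technology, then invoke Theorem~\ref{T:ss}) is in the right spirit, but two steps as written are not sound, and both are precisely where the paper's proof does something different. First, your opening claim that the characteristic-factor property of Lemma~\ref{L:4.7} ``applies equally to the prime-weighted averages'' because it ``depends on the family of iterates rather than on the averaging measure'' is unjustified and in general false: Lemma~\ref{L:4.7} allows only \emph{bounded} weights $c_{N,n}$, whereas writing $\frac{1}{\pi(N)}\sum_{p\in\P\cap[1,N]}$ as a weighted Ces\`aro average requires weights of size $\frac{N}{\pi(N)}\mathbf{1}_\P(n)\sim\log N\cdot\mathbf{1}_\P(n)$, which are not uniformly bounded. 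Characteristic factors do not automatically transfer from Ces\`aro to prime averages; making that transfer work is exactly the deep input from \cite{GT}. Second, the orthogonality you invoke is misstated: $\Lambda-1$ (equivalently $\Lambda'-1$) is \emph{not} orthogonal to nilsequences --- it correlates with periodic sequences --- so no transference can run with it directly. One must first apply the $W$-trick, i.e.\ work with $\Lambda'_{w,r}(n)=\frac{\phi(W)}{W}\Lambda'(Wn+r)$, and this changes the iterates to $[p_i(Wn+r)]$; you then need the additional (easy but necessary) verification that the polynomials $p_i(Wt+r)$ are still strongly independent, so that Theorem~\ref{T:ss} identifies the limit along each residue class as $\prod_{i=1}^\ell\int f_i\,d\mu$. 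Your sketch omits both the $W$-trick and this verification.

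The paper's actual route avoids your first step entirely: it never reduces the prime average to a nilsystem. It uses Lemma~\ref{L:n} to replace the prime average by the $\Lambda'$-weighted average, then Proposition~\ref{P:1K2} (Proposition~3.2 of \cite{K2}, already in the $W$-tricked form and valid for arbitrary systems and integer-part real polynomial iterates) to compare $A(W_0N)$ with the unweighted averages $B_{w_0,r}(N)$ along the progressions $W_0n+r$, and finally applies Theorem~\ref{T:ss} to the shifted family $\{p_i(W_0t+r)\}$ after checking it remains strongly independent. Your proposed Fourier-expansion-of-the-fractional-part argument on a lifted nilmanifold is essentially an attempt to reprove Proposition~\ref{P:1K2} from scratch; since you ultimately concede that this ``is precisely the content of the prime-average results proved in \cite{K2},'' the cleaner and correct configuration is to cite that proposition directly at the level of the original system, as the paper does, rather than after a nilsystem reduction whose legitimacy for prime weights is itself the point at issue.
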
 

Theorem~\ref{T:ssp} has the following implications, analogous to the ones that Theorem~\ref{T:ss} has.

\begin{theorem}\label{T:r1p}
Let $\ell\in\mathbb{N}$ and $p_1,\ldots,p_\ell\in \mathbb{R}[t]$ be real strongly independent polynomials. Then for every system $(X,\mathcal{B},\mu,T)$ and $A\in \mathcal{B}$ we have \begin{equation*}\label{E:r1p}
\lim_{N\to\infty}\frac{1}{\pi(N)}\sum_{p\in\P\cap[1,N]} \mu\Big(A\cap T^{-[p_1(p)]}A\cap\ldots\cap T^{-[p_\ell(p)]}A\Big)\geq (\mu(A))^{\ell+1}.
\end{equation*}
\end{theorem}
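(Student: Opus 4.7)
The plan is to mimic the derivation of Theorem~\ref{T:r1} from Theorem~\ref{T:ss}, with Theorem~\ref{T:ssp} playing the role of Theorem~\ref{T:ss}. First, I would upgrade Theorem~\ref{T:ssp} to arbitrary (not necessarily ergodic) systems via the ergodic decomposition, exactly as the paper does between \eqref{E:ss1} and \eqref{E:ss1'}: if $\mu=\int\mu_t\,d\lambda(t)$ is the ergodic decomposition, then Theorem~\ref{T:ssp} applied on each $\mu_t$ (together with dominated convergence, using the uniform bound by $\|f_i\|_\infty$) gives that the prime averages
$$\frac{1}{\pi(N)}\sum_{p\in\P\cap[1,N]} T^{[p_1(p)]}f_1\cdots T^{[p_\ell(p)]}f_\ell$$
converge in $L^2(\mu)$ to $\prod_{i=1}^\ell \mathbb{E}(f_i\mid\mathcal{I}(T))$.

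Second, I would set $f_1=\cdots=f_\ell=\mathbf{1}_A$ and pair the above $L^2$ limit against $\mathbf{1}_A\in L^2(\mu)$. By Cauchy--Schwarz the inner product with $\mathbf{1}_A$ can be interchanged with the limit, yielding
$$\lim_{N\to\infty}\frac{1}{\pi(N)}\sum_{p\in\P\cap[1,N]}\mu\bigl(A\cap T^{-[p_1(p)]}A\cap\ldots\cap T^{-[p_\ell(p)]}A\bigr)=\int \mathbf{1}_A\cdot\mathbb{E}(\mathbf{1}_A\mid\mathcal{I}(T))^{\ell}\,d\mu.$$
Since $\mathbb{E}(\mathbf{1}_A\mid\mathcal{I}(T))$ is $\mathcal{I}(T)$-measurable, the defining property of conditional expectation lets me rewrite the right-hand side as $\int \mathbb{E}(\mathbf{1}_A\mid\mathcal{I}(T))^{\ell+1}\,d\mu$.

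Third, I would finish by applying Jensen's inequality for the convex function $x\mapsto x^{\ell+1}$ on $[0,1]$ with respect to the probability measure $\mu$:
$$\int \mathbb{E}(\mathbf{1}_A\mid\mathcal{I}(T))^{\ell+1}\,d\mu\;\geq\;\left(\int \mathbb{E}(\mathbf{1}_A\mid\mathcal{I}(T))\,d\mu\right)^{\ell+1}=(\mu(A))^{\ell+1}.$$

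There is essentially no obstacle; the only step requiring minor care is the ergodic decomposition upgrade of Theorem~\ref{T:ssp}, where one must justify that the $L^2(\mu_t)$-convergence on each ergodic fiber yields $L^2(\mu)$-convergence. This is standard: the norms $\bigl\|\text{average}_N-\prod_i \int f_i\,d\mu_t\bigr\|_{L^2(\mu_t)}^2$ tend to $0$ pointwise in $t$ and are uniformly bounded by $4\prod_i\|f_i\|_\infty^2$, so dominated convergence delivers convergence in $L^2(\mu)$ to $\prod_i\mathbb{E}(f_i\mid\mathcal{I}(T))$. Once this is in hand, the rest of the argument is a direct transcription of the classical lower-bound argument for multiple recurrence.
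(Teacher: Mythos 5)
Your proposal is correct and follows essentially the route the paper intends: the paper derives Theorem~\ref{T:r1p} from Theorem~\ref{T:ssp} exactly as Theorem~\ref{T:r1} is derived from Theorem~\ref{T:ss} (via the ergodic decomposition upgrade carried out between \eqref{E:ss1} and \eqref{E:ss1'}, followed by the standard pairing-with-$\mathbf{1}_A$ and Jensen/H\"older argument cited from Theorem~2.8 of \cite{F1}). Your dominated-convergence justification of the decomposition step and the identity $\int \mathbf{1}_A\cdot\mathbb{E}(\mathbf{1}_A\mid\mathcal{I}(T))^{\ell}\,d\mu=\int\mathbb{E}(\mathbf{1}_A\mid\mathcal{I}(T))^{\ell+1}\,d\mu$ are both sound, so no gaps remain.
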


\begin{theorem}\label{T:r2p}
Let $\ell\in\mathbb{N}$ and $p_1,\ldots,p_\ell\in \mathbb{R}[t]$ be real strongly independent polynomials. Then for every $E\subseteq \mathbb{N}$ we have $$\liminf_{N\to\infty}\frac{1}{\pi(N)}\sum_{p\in\P\cap[1,N]} \bar{d}(E\cap (E-[p_1(p)])\cap\ldots\cap (E-[p_\ell(p)]))\geq (\bar{d}(E))^{\ell+1}.$$
\end{theorem}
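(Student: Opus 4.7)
The plan is to deduce Theorem~\ref{T:r2p} from Theorem~\ref{T:r1p} via Furstenberg's correspondence principle, in exact analogy with the way Theorem~\ref{T:r2} is deduced from Theorem~\ref{T:r1}. First I would apply the correspondence principle to the set $E\subseteq \mathbb{N}$. This produces a system $(X,\mathcal{B},\mu,T)$ and a set $A\in \mathcal{B}$ with $\mu(A)=\bar{d}(E)$ such that for every $n_1,\ldots,n_\ell\in \mathbb{Z}$,
\[
\bar{d}(E\cap(E-n_1)\cap\ldots\cap(E-n_\ell))\geq \mu(A\cap T^{-n_1}A\cap\ldots\cap T^{-n_\ell}A).
\]

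Next I would specialise $n_i=[p_i(p)]$ for each prime $p$, average the resulting pointwise inequality over $p\in\mathbb{P}\cap[1,N]$ with weight $1/\pi(N)$, and take $\liminf$ as $N\to\infty$. Since the correspondence inequality holds pointwise in $p$, this yields
\[
\liminf_{N\to\infty}\frac{1}{\pi(N)}\sum_{p\in\mathbb{P}\cap[1,N]}\bar{d}\bigl(E\cap(E-[p_1(p)])\cap\ldots\cap(E-[p_\ell(p)])\bigr)\geq \liminf_{N\to\infty}\frac{1}{\pi(N)}\sum_{p\in\mathbb{P}\cap[1,N]}\mu\bigl(A\cap T^{-[p_1(p)]}A\cap\ldots\cap T^{-[p_\ell(p)]}A\bigr).
\]
By Theorem~\ref{T:r1p} applied to the system $(X,\mathcal{B},\mu,T)$ and the set $A$, the right-hand side is in fact a genuine limit and is bounded below by $(\mu(A))^{\ell+1}$. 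Substituting $\mu(A)=\bar{d}(E)$ gives the claimed bound $(\bar{d}(E))^{\ell+1}$.

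No genuine obstacle arises in this step: all of the ergodic-theoretic content is packaged inside Theorem~\ref{T:r1p}, which itself rests on Theorem~\ref{T:ssp} together with the ergodic decomposition (to reduce from a general system to the ergodic case). Crucially, the correspondence inequality is pointwise in the shifts and therefore survives weighting by any probability measure on $\mathbb{Z}$, in particular the normalised counting measure on $\mathbb{P}\cap[1,N]$, so no additional equidistribution or uniformity-along-primes input is required for this final deduction. The heart of the work lies upstream, in proving Theorem~\ref{T:ssp}.
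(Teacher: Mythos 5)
Your proposal is correct and matches the paper's (implicit) argument: the paper states that Theorem~\ref{T:r2p} follows from Theorem~\ref{T:r1p} in exact analogy with the deduction of Theorem~\ref{T:r2} from Theorem~\ref{T:r1} via Furstenberg's correspondence principle, which is precisely what you do. Your observation that the correspondence inequality is pointwise in the shifts and so survives averaging over $\P\cap[1,N]$ is the right justification for why no further input along primes is needed.
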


\begin{theorem}\label{T:s11p}
Let $\ell\in\mathbb{N}$ and $p_1,\ldots,p_\ell\in \mathbb{R}[t]$ be real strongly independent polynomials. Then every $E\subseteq \mathbb{N}$ with $\bar{d}(E)>0$ contains arithmetic configurations of the form
\begin{equation*}\label{E:s11p}
\{m, m+[p_1(p)],m+[p_2(p)],\ldots,m+[p_\ell(p)]\}
\end{equation*}
for some $m\in \mathbb{Z}$ and $p\in\mathbb{P}$ with $[p_i(p)]\neq 0,$ for all $1\leq i\leq \ell.$
\end{theorem}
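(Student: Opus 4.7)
The plan is to deduce Theorem~\ref{T:s11p} from Theorem~\ref{T:r2p} in exactly the way that Theorem~\ref{T:s11} is obtained from Theorem~\ref{T:r2}. Let $E\subseteq \N$ with $\bar{d}(E)=\delta>0$. By Theorem~\ref{T:r2p},
\[
\liminf_{N\to\infty}\frac{1}{\pi(N)}\sum_{p\in\P\cap[1,N]}\bar{d}\bigl(E\cap(E-[p_1(p)])\cap\cdots\cap(E-[p_\ell(p)])\bigr)\ \geq\ \delta^{\ell+1}>0,
\]
so there are already infinitely many primes $p$ for which the intersection $E\cap(E-[p_1(p)])\cap\cdots\cap(E-[p_\ell(p)])$ is non-empty; any $m$ in such an intersection lies in $E$ and satisfies $m+[p_i(p)]\in E$ for every $1\le i\le\ell$, producing a configuration of the required shape.

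The only additional point to secure is the condition $[p_i(p)]\neq 0$ for all $i$. For this I would invoke the definition of strong independence: each $p_i$ is itself a non-trivial linear combination of $\{p_1,\ldots,p_\ell\}$, hence must carry an irrational coefficient at some monomial of positive degree, so $\deg p_i \geq 1$ and $|p_i(n)|\to\infty$. Consequently, for each $i$ the set $\{p\in\P:[p_i(p)]=0\}$ is finite, and removing the union of these finitely many ``degenerate'' primes from the average in Theorem~\ref{T:r2p} affects neither the Ces{\`a}ro liminf nor the lower bound $\delta^{\ell+1}$. Therefore there exists a prime $p$, outside this finite exceptional set, for which the corresponding intersection is non-empty (in fact has positive upper density); picking any $m\in \N$ from it yields the desired configuration in $E$.

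There is essentially no genuine obstacle to overcome: the whole argument is a routine combinatorial extraction from Theorem~\ref{T:r2p}. The one place where one should pay minor attention is in verifying that the strong independence hypothesis already forces each individual polynomial $p_i$ to be non-constant, which is exactly what rules out the degenerate situation $[p_i(p)]=0$ and prevents the configuration from collapsing to a single point.
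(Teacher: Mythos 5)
Your proposal is correct and follows exactly the route the paper intends: Theorem~\ref{T:s11p} is presented there as an immediate implication of Theorem~\ref{T:r2p}, in complete analogy with the deduction of Theorem~\ref{T:s11} from Theorem~\ref{T:r2}, and your extraction argument (positive liminf forces infinitely many primes with non-empty intersection, while strong independence makes each $p_i$ non-constant so only finitely many primes are degenerate) supplies precisely the details the paper leaves implicit.
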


We close this section with the remark that we believe that the respective reformulations of the results stated in this section for a single transformation hold for several commuting transformations but the method we use does not allow us to prove them in this more general setting.  

\section{Background Material}

\subsection{Nilmanifolds}
In this subsection we recall some basic facts on nilmanifolds and equidistribution results on them.

\subsubsection{Definitions and basic properties}
Let $ G $ be a $ k $-step nilpotent Lie group, meaning $ G_{k+1}=\{e\} $ for some $ k\in \N $, where $ G_k=[G,G_{k-1}] $ denotes the $ k $-th commutator subgroup, and $ \Gamma $ is a discrete cocompact subgroup of $ G $. The  compact homogeneous space
$ X=G/\Gamma $ is called a $ k $-\emph{nilmanifold} (or just \emph{nilmanifold}).

The group $ G $ acts on $ G/\Gamma $ by left translation where the translation  by an element $ b\in G $ is given by $ T_b(g\Gamma)=(bg)\Gamma $. We denote by $ m_X $ the normalized \emph{Haar measure} on $ X $, meaning, the unique probability measure that is invariant under the action of $ G $ by left translations and $ \mathcal{G}/\Gamma $ denotes the Borel $ \sigma $-algebra of $ G/\Gamma $. If $ b\in G $, we call the system $ (G/\Gamma, \mathcal{G}/\Gamma,m_X,T_b) $ a $ k $-\emph{step nilsystem} (or just \emph{nilsystem}) and the elements of  $ G $ \emph{nilrotations}.

\subsubsection{Equidistribution on nilmanifolds} For a connected and simply connected Lie group $G,$ let $ \exp :\mathfrak{g}\to G $ be the exponential map, where $ \mathfrak{g} $ is the Lie algebra of $ G $. For $ b \in G $ and $ s\in \R $ we define the element $ b^s $ of $ G $ as follows: If $ X\in \mathfrak{g} $ is such that $ \exp (X)=b $, then $ b^s=\exp(sX) $ (this is well defined since under the aforementioned assumptions $ \exp$ is a bijection).

 If $ (a(n))_{n} $ is a sequence of real numbers and $ X=G/\Gamma $ is a nilmanifold with $ G $ connected and simply connected, we say that the sequence
$ (b^{a(n)}x)_{n} $ is \emph{equidistributed} in a sub-nilmanifold $ Y $ of $ X $, if for every $ F\in C(X) $ we have
\[ \lim_{N\to \infty} \frac{1}{N}\sum_{n=1}^N F(b^{a(n)}x)=\int F\; d m_Y. \]
If the sequence $ (a(n))_{n} $ takes only integer values, we are not obliged to assume that $ G $ is connected and simply connected.


A nilrotation $ b\in G $ is \emph{ergodic}, or \emph{acts ergodically} on $ X $, if the sequence $ (b^n\Gamma)_{n} $ is dense in $X.$ If $ b\in G $ is ergodic, then for every
$ x\in X $ the sequence $ (b^nx)_{n} $ is equidistributed in $ X $.



Let $ X=G/\Gamma $ be a nilmanifold and $ b\in G $. Then the orbit closure
$\overline{(b^n\Gamma)}_{n} $ of $ b $ has the structure of a nilmanifold. Furthermore, the sequence $ (b^n\Gamma)_{n} $ is equidistributed in $ \overline{(b^n\Gamma)}_{n}  $. If $ G $ is connected and simply connected and $ b\in G $, then
$ \overline{(b^s\Gamma)}_{s\in \R} $ is a nilmanifold. Furthermore, the nilflow $ (b^s\Gamma)_{s\in \R} $ is equidistributed in $ \overline{(b^s\Gamma)}_{s\in \R} $.



If $ G $ is a nilpotent group, then a sequence $ g:\N\to G $ of the form
$ g(n)=b_1^{p_1(n)}\cdots b_k^{p_k(n)}, $ where $ b_i\in G $ and $ p_i $ are polynomials taking integer values at the integers for every $1\leq i\leq k $ is called a \emph{polynomial sequence} in $ G $.
A \emph{polynomial sequence on the nilmanifold} $ X= G/\Gamma $ is a sequence of the form
$ (g(n)\Gamma)_{n} $ where $ g:\N\to G $ is a polynomial sequence in $ G $.

\medskip

The following qualitative equidistribution result was established by Leibman in \cite{L}:

\begin{theorem}[\textbf{Leibman}, \cite{L}]\label{T:t1}
Suppose that $ X=G/\Gamma $ is a nilmanifold with $ G $ connected and simply connected and
$ (g(n))_{n} $ is a polynomial sequence in $ G $. Let $ Z=G/([G,G]\Gamma) $ and
$ \pi :X\to Z $ be the natural projection. Then the following statements hold:
\begin{enumerate}
\item For every $ x\in X $ the sequence $ (g(n)x)_{n} $ is equidistributed in  a finite union of subnilmanifolds $ X $.
\item For every $ x\in X $ the sequence $ (g(n)x)_{n} $ is equidistributed in $ X $ if and only if the sequence $ (g(n)\pi(x))_{n} $ is equidistributed in $ Z $.
\end{enumerate}
\end{theorem}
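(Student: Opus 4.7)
The plan is to reduce both statements to Weyl-type equidistribution on the torus factor $Z=G/([G,G]\Gamma)$ via induction on the nilpotency step $k$ of $G$, with van der Corput's inequality as the main analytic tool. The forward direction of (ii) is immediate from continuity of $\pi$ and pushforward of measures, and the base case $k=1$ is vacuous since $X=Z$.

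For the inductive step I would set $K=G_k$, which is central in $G$ since $G$ is $k$-step nilpotent, and consider the factor map $X\to X/K$ onto a nilmanifold of step at most $k-1$. Expanding any continuous $F\in C(X)$ via \emph{vertical} Pontryagin duality on the fibers of this projection, it suffices to handle functions transforming according to a single character $\chi$ of $K$. For the trivial character, $F$ descends to $X/K$ and the inductive hypothesis applies to the projected polynomial sequence (which has the same image in $Z$). For a nontrivial $\chi$ I would apply van der Corput: the squared Ces\`aro average is controlled by averages over $h$ of $\frac{1}{N}\sum_{n=1}^{N}F(g(n+h)x)\overline{F(g(n)x)}$. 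The pair sequence $(g(n+h),g(n))$ is polynomial in $G\times G$, and modulo the antidiagonal copy of $K$ (on which $\chi\overline{\chi}$ is trivial) this descends to a polynomial sequence in a quotient whose top commutator subgroup is strictly smaller, putting us in the inductive setting. Iterating trivializes the $K$-component and, together with the hypothesis that $(g(n)\pi(x))_n$ equidistributes in $Z$, forces the averages to decay.

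Part (i) then follows from (ii) by identifying the orbit closure: the image sequence $(g(n)\pi(x))_n$ in the torus $Z$ has closure equal to a finite union of subtori (a standard polynomial Weyl argument on tori), whose preimage under $\pi$ decomposes $X$ into a finite union of sub-nilmanifolds; on each of these, the induction step applied to the restricted sequence together with (ii) yields full equidistribution. The main obstacle is the bookkeeping in the van der Corput step: one must verify that the discrete derivative $g(n+h)g(n)^{-1}$ is again a polynomial sequence in $G$ and that its \emph{Leibman degree} relative to the lower central filtration is strictly smaller than that of $g$, so that the induction actually closes. This rests on the polynomial extension of Mal'cev coordinates together with the Hall--Petresco commutator expansion, which together guarantee closure of the class of polynomial sequences under discrete differentiation and a strict drop of degree at each differentiation.
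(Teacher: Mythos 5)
First, a point of comparison: the paper does not prove Theorem~\ref{T:t1} at all --- it is quoted from Leibman \cite{L} and used as a black box, so there is no internal proof to measure your argument against. Your sketch follows the other standard route to this statement (vertical Fourier decomposition on the last nontrivial commutator subgroup plus van der Corput, in the style of Host--Kra and Green--Tao), rather than Leibman's own argument in \cite{L}, which proceeds through his structure theory for orbit closures of polynomial sequences and the fact that polynomial sequences in a nilpotent group form a group under pointwise multiplication.

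That said, your induction as stated does not close, and this is a genuine gap rather than bookkeeping. You induct on the nilpotency step $k$ and claim that, after applying van der Corput to a nontrivial vertical character $\chi$ of $K=G_k$, the function $F\otimes\overline{F}$ descends to $(X\times X)/K^{\Delta}$ ``whose top commutator subgroup is strictly smaller.'' It is not: the $k$-th term of the lower central series of $G\times G$ is $K\times K$, and $(K\times K)/K^{\Delta}\cong K$ is nontrivial, so $(G\times G)/K^{\Delta}$ is in general still $k$-step. The quantity that actually drops under discrete differentiation is the degree of the polynomial sequence relative to a suitable filtration (via Hall--Petresco, as you note), so the correct induction is on that degree, or a double induction on degree and step; you gesture at this in your last sentence, but it contradicts the induction scheme you set up at the outset, and without fixing the induction parameter the argument does not terminate. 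Separately, your derivation of (i) from (ii) is too quick: the projected orbit closure in $Z$ being a finite union of cosets of a subtorus gives a candidate union $\pi^{-1}(W)$ of subnilmanifolds, but the true orbit closure in $X$ may be strictly smaller, so ``the induction step applied to the restricted sequence together with (ii) yields full equidistribution'' assumes exactly what must be ruled out. One needs a descending-chain argument: pass to $H=\pi^{-1}(\text{subtorus})$, split $n$ into arithmetic progressions so that the translated, restricted sequences are again polynomial sequences in $H$, test the criterion (ii) there, and recurse, with termination guaranteed by dimension.
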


Let $ X=G/\Gamma $ be a nilmanifold with $ G $ connected and simply connected, then $ Z $ is a connected compact abelian Lie group, hence a torus, meaning $ \T^s $ for some $ s \in \N $, and as a consequence every nilrotation in $ Z $ is isomorphic to a rotation on $ \T^s $.


\subsection{Ergodic Theory}
We gather below some basic notions and facts from ergodic theory that we use throughout the paper.

\subsubsection{Factors} A \emph{homomorphism} from a system $ (X,\CX, \mu, T) $ onto a system $ (Y, \mathcal{Y}, \nu, S) $ is a measurable map $ \pi :X\to Y $, such that $ \mu \circ \pi \inv =\nu $ and $ S\circ \pi(x)=\pi \circ T(x) $ for $ x\in X $. When we have such a homomorphism we say that the system $ (Y, \mathcal{Y}, \nu, S) $ is a \emph{factor} of the system $ (X,\CX, \mu, T) $. If the factor map $ \pi :X\to Y $ can be chosen to be injective, then we say that the systems $ (X,\CX, \mu, T) $ and $ (Y, \mathcal{Y}, \nu, S) $ are \emph{isomorphic}.
A factor can also be characterised by $ \pi \inv(\mathcal{Y}) $ which is a $ T $-invariant sub-$ \sigma $-algebra of $ \mathcal{X} $. By a classical abuse of terminology we denote by the same letter the $ \sigma $-algebra $ \mathcal{Y} $ and $ \pi \inv(\mathcal{Y}) $.

\subsubsection{Characteristic Factors}
Let $ (X,\CX, \mu, T) $ be a system. We say that the $ \sigma $-algebra $ \mathcal{Y} $ of $ \mathcal{X} $ is a \emph{characteristic factor} for the family of integer sequences
$ \{ (a_1(n))_n,\ldots, (a_\ell(n))_n \} $
if $ \mathcal{Y} $ is $ T $-invariant and
\[\lim_{N\to\infty} \norm{\frac{1}{N}\sum_{n=1}^N T^{a_1(n)}f_1\cdot\ldots\cdot T^{a_\ell(n)}f_\ell- 
\frac{1}{N}\sum_{n=1}^N T^{a_1(n)}\tilde{f}_1\cdot\ldots\cdot T^{a_\ell(n)}\tilde{f}_\ell }_{L^2(\mu)}= 0, \]
 where $ \tilde{f}_i=\E(f_i|\mathcal{Y}) $, for $ f_i\in L^\infty(\mu) $ for all
$1\leq i\leq \ell $ \footnote{Equivalently, if $ \E(f_i|\mathcal{Y})=0 $ for some $1\leq  i\leq  \ell  $, then $ \lim_{N\to\infty}\norm{\frac{1}{N}\sum_{n=1}^N T^{a_1(n)}f_1\cdot\ldots\cdot T^{a_\ell(n)}f_\ell}_{L^2(\mu)}= 0 $. }.

\subsubsection{Seminorms and Nilfactors}

We follow \cite{HK99} and \cite{CFH} for the inductive definition of the seminorms $\nnorm{\cdot}_k.$ More specifically, the definition that we use here follows from \cite{HK99} (in the ergodic case), \cite{CFH} (in the general case) and the use of von Neumann's ergodic theorem.

Let $(X,\mathcal{B},\mu,T)$ be a system and $f\in L^\infty(\mu).$  We define inductively the seminorms $\nnorm{f}_k$ as follows: For $ k=1 $ we set
$$ \nnorm{f}_1:= \norm{\E(f|\mathcal{I}(T))}_{L^2(\mu)}.$$
For $k\geq 1,$ we let
$$\nnorm{f}^{2^{k+1}}_{k+1}:=\lim_{N\to\infty}\frac{1}{N}\sum_{n=1}^{N}\nnorm{\bar{f}\cdot T^n f}^{2^k}_k . $$
It was shown in \cite{HK99} that for every integer $ k\geqslant 1 $ all these limits exist and $\nnorm{\cdot}_k $ defines a seminorm on $ L^\infty(\mu) $.

Using these seminorms we can construct factors $ \mathcal{Z}_k=\mathcal{Z}_k(T) $ of $ X $ characterized by  the property:
\[ \text{ for } f\in L^\infty(\mu),\;\;\; \E(f|\mathcal{Z}_{k-1})=0 \text{ if and only if } \nnorm{f}_k=0. \]

It was also shown in \cite{HK99} that for every $ k \in \N $ the factor
$ \mathcal{Z}_k $ has an algebraic structure, in fact we can assume that it is a $ k $-step nilsystem. This is the content of the following Structure theorem, which we recall in the ergodic case:

\begin{theorem}[\textbf{Host \& Kra}, \cite{HK99}]\label{T:HK}
Let $(X,\mathcal{B},\mu,T)$ be an ergodic system and $ k\in\mathbb{N}$. Then the factor $\mathcal{Z}_k(T)$ is an inverse limit of $ k $-step nilsystems \footnote{By this we mean that there exist $T$-invariant sub-$\sigma$-algebras $\mathcal{Z}_{k,i}, i\in \N$, of $\mathcal{B}$ such that $\mathcal{Z}_k=\bigcup_{i\in\N}\mathcal{Z}_{k,i}$ 
and for every $i\in \N$, the factors induced by  the $\sigma$-algebras $\mathcal{Z}_{k,i}$ are isomorphic to $k$-step nilsystems.}.
\end{theorem}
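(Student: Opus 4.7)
The plan is to follow the inductive machinery of Host--Kra. First I would construct, by induction on $k$, a system of self-joinings $\mu^{[k]}$ on $X^{2^k}$: set $\mu^{[0]}=\mu$, and define $\mu^{[k+1]}$ on $X^{2^{k+1}}=X^{2^k}\times X^{2^k}$ as the relatively independent self-joining of $\mu^{[k]}$ over the $\sigma$-algebra of $T^{[k]}$-invariant sets, where $T^{[k]}=T\times\cdots\times T$ acts diagonally on $X^{2^k}$. A routine verification using von Neumann's ergodic theorem shows that the inductively defined seminorms $\nnorm{f}_k$ from the text admit the ``cubic'' description
\[\nnorm{f}_k^{2^k}=\int_{X^{2^k}}\prod_{\epsilon\in\{0,1\}^k}\mathcal{C}^{|\epsilon|}f\, d\mu^{[k]},\]
where $\mathcal{C}$ denotes complex conjugation and $|\epsilon|=\epsilon_1+\cdots+\epsilon_k$. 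This cubic formulation makes the symmetries and positivity of $\nnorm{\cdot}_k$ transparent, and realises $\mathcal{Z}_k$ as the minimal factor with the property that $\nnorm{f}_{k+1}=0 \iff \E(f\mid \mathcal{Z}_k)=0$.

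Next I would prove by induction on $k$ that $\mathcal{Z}_k$ is an inverse limit of $k$-step nilsystems. The base cases are classical: $\mathcal{Z}_0$ is the trivial factor, and $\mathcal{Z}_1$ is the Kronecker factor, hence a rotation on a compact abelian group, which is itself an inverse limit of finite-dimensional tori (i.e.\ of $1$-step nilsystems). For the inductive step I would show that the extension $\mathcal{Z}_k\to \mathcal{Z}_{k-1}$ is an isometric, in fact abelian group, extension: the cubic measure $\mu^{[k+1]}$ produces, by disintegration over $\mathcal{Z}_{k-1}$, a cocycle on $\mathcal{Z}_{k-1}$ valued in the unitary group, and the hypothesis that $\nnorm{\cdot}_{k+1}$ sees $\mathcal{Z}_k$ but not $\mathcal{Z}_{k-1}$ forces this cocycle to be of a very rigid ``type $k$'' shape, which in turn realises the extension as a quotient by a compact abelian Polish group.

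The heart of the argument, and what I expect to be the main obstacle, is upgrading these successive abelian extensions to a genuine $k$-step \emph{nilpotent} structure. Concretely, assuming $\mathcal{Z}_{k-1}$ is already an inverse limit of $(k-1)$-step nilsystems $G/\Gamma$, one needs to show that the cocycle defining the extension $\mathcal{Z}_k\to\mathcal{Z}_{k-1}$ is cohomologous to a cocycle of ``polynomial type'', so that the total extension arises from a $k$-step nilpotent Lie group acting on an appropriate homogeneous space. The necessary cohomological statement requires a careful study of the functional equations that type $k$ cocycles satisfy under translation by elements of $G$, exploiting the parallelepiped identities from the cubic step. Finally, to obtain an inverse limit rather than a single nilsystem, one approximates any $f\in L^\infty(\mathcal{Z}_k,\mu)$ by functions measurable with respect to a countably generated sub-factor, and verifies, via a Mackey-type argument and finite-dimensional approximation of the structure group, that each such sub-factor is isomorphic to a genuine $k$-step nilsystem.
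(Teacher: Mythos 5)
This theorem is not proved in the paper at all: it is the deep structure theorem of Host and Kra, stated with attribution and imported wholesale from \cite{HK99}. So there is no ``paper's proof'' to compare against; the only meaningful comparison is with the original argument of Host and Kra, and against that standard your proposal is a roadmap rather than a proof. The ingredients you name are the right ones and in the right order --- the cubic measures $\mu^{[k]}$ built by relatively independent self-joinings over the invariant $\sigma$-algebra of the diagonal action, the identity $\nnorm{f}_k^{2^k}=\int\prod_{\epsilon}\mathcal{C}^{|\epsilon|}f\,d\mu^{[k]}$, the characterization of $\mathcal{Z}_k$ via vanishing of $\nnorm{\cdot}_{k+1}$, the base cases ($\mathcal{Z}_0$ trivial, $\mathcal{Z}_1$ the Kronecker factor), and the inductive presentation of $\mathcal{Z}_k\to\mathcal{Z}_{k-1}$ as a compact abelian group extension governed by a cocycle of ``type $k$''.

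The genuine gap is exactly where you flag it, and it cannot be waved at: passing from a tower of abelian extensions with type-$k$ cocycles to an honest $k$-step nilpotent homogeneous-space structure is the entire content of the theorem, and occupies the bulk of the roughly ninety pages of \cite{HK99}. Nothing in your sketch explains why the functional equations satisfied by a type-$k$ cocycle force it to be (cohomologous to) one arising from a nilpotent Lie group --- this is established in the original via a delicate analysis of the group of measure-preserving transformations of $X$ that act ``side-wise'' on $X^{[k]}$ preserving $\mu^{[k]}$, a toral/Lie approximation of that group, and only then a Mackey-type descent; a naive ``the cocycle is rigid, hence polynomial'' step would fail (indeed, not every isometric extension of a nilsystem by an abelian group is a nilsystem, so the rigidity must be used in an essential, quantitative way). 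If you intend to cite the result, as the paper does, your summary is a fair account of how it is proved; if you intend to prove it, the decisive steps are only named, not carried out.
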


Because of this result we call $ \mathcal{Z}_k $ the $ k $-\emph{step nilfactor} of the system. The smallest factor that is an extension of all finite step nilfactors is denoted by $ \mathcal{Z}=\mathcal{Z}(T) $, meaning, $\mathcal{Z}=\bigvee_{k\in\mathbb{N}}\mathcal{Z}_k $, and is called the \emph{nilfactor} of the system.

\section{Equidistribution Results}

In this section we establish some equidistribution results in order to prove the convergence and recurrence results stated in Section 2. In order to obtain these equidistibution results we follow the main strategy introduced in \cite{F2} (Sections 4 and 5).

First, we give an equidistribution result involving nil-orbits of several sequences of strongly independent polynomials.

\begin{theorem}\label{T:T1}
Let $\ell\in\mathbb{N}$ and $p_1,\ldots,p_\ell\in\mathbb{R}[t]$ be real strongly independent polynomials.
\begin{enumerate}
\item If $X_i=G_i/\Gamma_i,$ $1\leq i\leq \ell,$ are nilmanifolds with $G_i$ connected and simply connected, then for every $b_i\in G_i$ and $x_i\in X_i$ the sequence $$\Big(b_1^{p_1(n)}x_1,\ldots, b_\ell^{p_\ell(n)}x_\ell\Big)_{n}$$ is equidistributed in the nilmanifold $$\overline{(b_1^sx_1)}_{s\in\mathbb{R}}\times\cdots\times \overline{(b_\ell^sx_\ell)}_{s\in\mathbb{R}}.$$
\item If $X_i=G_i/\Gamma_i,$ $1\leq i\leq \ell,$ are nilmanifolds, then for every $b_i\in G_i$ and $x_i\in X_i$ the sequence $$\Big(b_1^{[p_1(n)]}x_1,\ldots, b_\ell^{[p_\ell(n)]}x_\ell\Big)_{n}$$ is equidistributed in the nilmanifold $$\overline{(b_1^nx_1)}_{n}\times\cdots\times \overline{(b_\ell^nx_\ell)}_{n}.$$
\end{enumerate}
\end{theorem}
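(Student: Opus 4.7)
My plan is to deduce both parts of Theorem~\ref{T:T1} from Leibman's equidistribution result, Theorem~\ref{T:t1}, combined with a Weyl-type argument in which the strong independence of $p_1,\ldots,p_\ell$ plays the decisive role. For part (i), I would first replace each $X_i$ by the sub-nilmanifold $\overline{(b_i^sx_i)}_{s\in\R}$, which is permissible since the polynomial sub-orbit $(b_i^{p_i(n)}x_i)_n$ lies in this closure; after this reduction one may assume that the continuous one-parameter orbit of $b_i$ through $x_i$ is dense in $X_i$. The product group $G=G_1\times\cdots\times G_\ell$ is then connected and simply connected, so $g(n)=(b_1^{p_1(n)},\ldots,b_\ell^{p_\ell(n)})$ is a (real-exponent) polynomial sequence in $G$, and Theorem~\ref{T:t1}(ii) reduces equidistribution of $(g(n)x)_n$ in $X=X_1\times\cdots\times X_\ell$ to equidistribution of its image on the maximal torus quotient $Z_1\times\cdots\times Z_\ell=\T^{s_1}\times\cdots\times\T^{s_\ell}$.

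On this torus the sequence takes the form $(p_1(n)\alpha_1,\ldots,p_\ell(n)\alpha_\ell)$, where $\alpha_i\in\R^{s_i}$ is the image of $b_i$ in the Lie algebra of $Z_i$. Density of the continuous orbit in $X_i$ forces the line $(s\alpha_i)_{s\in\R}$ to be dense in $\T^{s_i}$, equivalently the coordinates of $\alpha_i$ are $\Q$-linearly independent. Weyl's criterion then reduces the task to showing that for every non-zero $(m_1,\ldots,m_\ell)$ with $m_i\in\Z^{s_i}$, the real polynomial $\sum_{i=1}^\ell (m_i\cdot\alpha_i)\,p_i(t)$ is equidistributed modulo $1$. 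Setting $c_i:=m_i\cdot\alpha_i$, $\Q$-linear independence gives $c_i\neq 0$ whenever $m_i\neq 0$, so $\sum_i c_ip_i$ is a non-trivial linear combination of the $p_i$; strong independence then supplies a non-constant irrational coefficient, and classical Weyl equidistribution finishes part (i).

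For part (ii), where the $G_i$ need not be connected or simply connected, $b_i^{p_i(n)}$ is not a priori defined and Theorem~\ref{T:t1} does not apply directly. After passing to the orbit closure $\overline{(b_i^nx_i)}_n$ to reduce to the case where the discrete orbit is dense in $X_i$, I would invoke a standard Malcev-type lifting to produce, for each $i$, a nilmanifold $\tilde X_i=\tilde G_i/\tilde\Gamma_i$ with $\tilde G_i$ connected and simply connected, an element $\tilde b_i\in\tilde G_i$ acting minimally, a lift $\tilde x_i$, and a continuous factor map $\pi_i:\tilde X_i\to X_i$ with $\pi_i(\tilde b_i^n\tilde x_i)=b_i^nx_i$. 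The identity $\tilde b_i^{[p_i(n)]}=\tilde b_i^{p_i(n)}\,\tilde b_i^{-\{p_i(n)\}}$ (valid in the cover, where real exponents make sense) reformulates the problem as joint equidistribution of $(\tilde b_1^{p_1(n)}\tilde x_1,\ldots,\tilde b_\ell^{p_\ell(n)}\tilde x_\ell,\{p_1(n)\},\ldots,\{p_\ell(n)\})$ in $\tilde X_1\times\cdots\times\tilde X_\ell\times\T^\ell$. The ambient group $\tilde G_1\times\cdots\times\tilde G_\ell\times\R^\ell$ is connected and simply connected, so the argument of part (i) applies; in the resulting Weyl sums the relevant linear combinations of the $p_i$ have coefficients $c_i=m_i\cdot\alpha_i+k_i$ with $k_i\in\Z$, and $\Q$-linear independence of $\{1,\alpha_{i,1},\ldots,\alpha_{i,s_i}\}$ (which now follows from density of the discrete orbit) ensures non-triviality before strong independence is again invoked. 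A standard continuity argument, approximating test functions $F_i\in C(X_i)$ by the continuous maps $(\tilde y_i,t_i)\mapsto F_i(\pi_i(\tilde b_i^{-t_i}\tilde y_i))$ on $\tilde X_i\times\T$ and integrating the fractional-part coordinates against Haar measure on $\T^\ell$, recovers the claimed equidistribution in $X_1\times\cdots\times X_\ell$.

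The main obstacle I anticipate is the Weyl step in part (i): one must carefully translate the qualitative density hypothesis on each $X_i$ into a clean $\Q$-linear independence statement about $\alpha_i$ and then correctly identify $\sum_ic_ip_i$ as a non-trivial linear combination of the original polynomials so that strong independence delivers the required non-constant irrational coefficient. For part (ii) the bookkeeping around the Malcev lift is the technical burden, but once such a lift is in place the extension of the torus/Weyl argument with the auxiliary fractional-part coordinates, together with the continuity argument recovering $\mu_{X_i}$ from the fibre average on $\tilde X_i\times\T$, should be essentially routine.
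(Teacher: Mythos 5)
Your proposal is correct and follows essentially the same route as the paper: reduce to a single product nilmanifold, apply Leibman's Theorem~\ref{T:t1} to pass to the maximal torus factor, and close with Weyl's criterion, where strong independence guarantees that the relevant real linear combination $\sum_{i}c_ip_i$ has a non-constant irrational coefficient. The only difference is organizational: the two reductions you sketch by hand (the integer-part/fractional-part lifting to a connected, simply connected cover for part (ii), and the passage to an element acting ergodically on the continuous orbit closure for part (i)) are outsourced by the paper to Lemmas~5.1 and~5.2 of \cite{F2}, quoted as Lemmas~\ref{L:5.1} and~\ref{L:5.2}, with the latter feeding into Proposition~\ref{P:1}.
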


Notice that in order to prove Theorem~\ref{T:T1}, we can assume that $X_1=\ldots=X_\ell=X.$
Indeed, in the general case we consider the nilmanifold $\tilde{X}=X_1\times\cdots\times X_\ell.$ Then $\tilde{X}=\tilde{G}/\tilde{\Gamma},$ where $\tilde{G}=G_1\times \cdots\times G_\ell$ is connected and simply connected and $\tilde{\Gamma}=\Gamma_1\times\cdots\times\Gamma_\ell$ is a discrete cocompact subgroup of $\tilde{G}.$ Each $b_i$ can be considered as an element of $\tilde{G}$ and each $x_i$ as an element of $\tilde{X}.$

The following lemma shows that Part (ii) of Theorem~\ref{T:T1} follows from Part (i).

\begin{lemma}[Lemma~5.1, \cite{F2}]\label{L:5.1}
Let $\ell\in\mathbb{N}$ and $(a_1(n))_n,\ldots,(a_\ell(n))_n$ be sequences of real numbers. Suppose that for every nilmanifold $X=G/\Gamma,$ with $G$ connected and simply connected and every $b_1,\ldots,b_\ell\in G$ the sequence $$\Big(b_1^{a_1(n)}\Gamma,\ldots,b_\ell^{a_\ell(n)}\Gamma\Big)_{n}$$ is equidistributed in the nilmanifold  $$\overline{(b_1^s\Gamma)}_{s\in\mathbb{R}}\times\cdots\times \overline{(b_\ell^s\Gamma)}_{s\in\mathbb{R}}.$$ Then for every nilmanifold $X=G/\Gamma,$ every $b_1,\ldots,b_\ell\in G$ and $x_1,\ldots,x_\ell\in X$ the sequence $$\Big(b_1^{[a_1(n)]}x_1,\ldots, b_\ell^{[a_\ell(n)]}x_\ell\Big)_{n}$$ is equidistributed in the nilmanifold $$\overline{(b_1^nx_1)}_{n}\times\cdots\times \overline{(b_\ell^nx_\ell)}_{n}.$$
\end{lemma}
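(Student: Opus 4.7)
\emph{Reduction and strategy.} The plan is to lift to a connected and simply connected group, adjoin a torus factor that tracks the fractional parts $\{a_i(n)\}$, apply the hypothesis on the enlarged nilmanifold, and then approximate the discontinuous ``shift-by-fractional-part'' operation by continuous functions. By conjugating $b_i$ with any $g_i$ satisfying $x_i=g_i\Gamma$ we may assume $x_i=\Gamma$; a residue-class decomposition with respect to $[a_i(n)]\bmod k$, where $k\in \N$ is chosen so that $b_i^k$ lies in the identity component $G_0$ of $G$ for every $i$, then lets us replace $G$ by $G_0$; and finally lifting along the universal cover $\tilde G_0\to G_0$ (in which the preimage of $\Gamma\cap G_0$ is a discrete cocompact subgroup) reduces matters to the case where $G$ itself is connected and simply connected.

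\emph{Auxiliary nilmanifold.} Let $\hat X:=X\times \T$ with group $\hat G:=G\times \R$ (still connected and simply connected) and lattice $\hat \Gamma:=\Gamma\times \Z$, and set $\hat b_i:=(b_i,1)\in \hat G$. Then $\hat b_i^s=(b_i^s,s)$, so $\hat b_i^{a_i(n)}\hat \Gamma=(b_i^{a_i(n)}\Gamma,\{a_i(n)\})$. Applying the hypothesis to the data $(\hat X,\hat b_1,\ldots,\hat b_\ell)$ yields that the sequence $((b_i^{a_i(n)}\Gamma,\{a_i(n)\}))_{i=1}^\ell$ is equidistributed in $\prod_{i=1}^\ell W_i$, where $W_i:=\overline{(\hat b_i^s\hat \Gamma)}_{s\in \R}\subset X\times \T$.

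\emph{Approximation and conclusion.} Using the decomposition $b_i^{[a_i(n)]}=b_i^{a_i(n)}\,b_i^{-\{a_i(n)\}}$, for continuous $F_i$ on $X$ we write $F_i(b_i^{[a_i(n)]}\Gamma)=\phi_i(b_i^{a_i(n)}\Gamma,\{a_i(n)\})$ with $\phi_i(x,t):=F_i(b_i^{-t}x)$ for $t\in[0,1)$. Each $\phi_i$ is continuous on $X\times[0,1)$ but typically has a jump across the identification $t=0\sim t=1$ on $X\times \T$. For each $\varepsilon>0$, approximate $\phi_i$ by a continuous function $\phi_i^\varepsilon$ on $X\times \T$, bounded by $\|F_i\|_\infty$, coinciding with $\phi_i$ outside a strip of $\T$-measure $O(\varepsilon)$. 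The hypothesis applied to the continuous product $\prod_i\phi_i^\varepsilon$ provides its Ces\`aro limit as $N\to\infty$, while the error from replacing $\phi_i$ by $\phi_i^\varepsilon$ is bounded uniformly in $N$ by $O(\varepsilon)$ via the Ces\`aro equidistribution of each $(\{a_i(n)\})_n$ in $\T$ (the torus marginal of the same hypothesis). Sending $\varepsilon\to 0$ and identifying $\int_{W_i}F_i(b_i^{-t}x)\,dm_{W_i}=\int_{Z_i}F_i\,dm_{Z_i}$, with $Z_i:=\overline{(b_i^n\Gamma)}_n$ (via the parametrization of $W_i$ by $s\in \R$ and the identity $b_i^{-(s\bmod 1)}b_i^s=b_i^{\lfloor s\rfloor}$), gives equidistribution of $(b_i^{[a_i(n)]}\Gamma)_i$ in $\prod_i Z_i$, which is the required conclusion. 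The main obstacle is the uniform-in-$N$ control of the discontinuity error, for which the adjoined torus factor is essential; the group-theoretic reductions in the first step are routine.
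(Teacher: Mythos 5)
The paper does not prove this lemma at all: it is imported verbatim as Lemma~5.1 of \cite{F2}, so there is no in-paper argument to compare against. Your reconstruction does follow the strategy of that source in its core: the auxiliary skew product $\hat X=X\times\T$, $\hat b_i=(b_i,1)$, which converts $b_i^{[a_i(n)]}\Gamma$ into the evaluation of a bounded, almost-everywhere continuous (Riemann-integrable) function along $\hat b_i^{a_i(n)}\hat\Gamma$, the sandwiching of that function by continuous ones with the error controlled by the equidistribution of the fractional parts $\{a_i(n)\}$ (the torus marginal of the hypothesis), and the fiberwise computation identifying $\int_{W_i}F_i(b_i^{-t}x)\,dm_{W_i}$ with $\int_{Z_i}F_i\,dm_{Z_i}$. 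Those steps are correct as you present them.

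The one genuine gap is in your reduction to $G$ connected. You propose to fix $k$ with $b_i^k\in G_0$ and decompose according to the residues $[a_i(n)]\bmod k$, then ``replace $G$ by $G_0$.'' But the hypothesis is a statement about full Ces\`aro averages of the full sequences $(a_i(n))_n$; after restricting to the subsequence $\{n:\,[a_i(n)]\equiv r_i \pmod k \text{ for all } i\}$ you can no longer invoke it, and nothing you have said guarantees equidistribution (or even the existence of the relevant densities) along that subsequence. The repair stays inside your own framework: observe that $[a_i(n)]\equiv r_i\pmod k$ is equivalent to $\{(a_i(n)-r_i)/k\}\in[0,1/k)$, so the component selection, like the floor function itself, is cut out by a box condition on an extra circle coordinate; enlarging the auxiliary torus factor to track $a_i(n)\bmod k$ (i.e., using $\R/k\Z$ in place of $\T$) lets you encode both the residue restriction and the map $t\mapsto b_i^{-t}$ into a single Riemann-integrable test function on a connected, simply connected ambient nilmanifold, to which the hypothesis and your approximation argument apply in one stroke. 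With that modification (and the routine but necessary bookkeeping matching $\overline{(b_i^nx_i)}_n$ with the union of translates of the connected orbit closure of $b_i^k$), your argument is complete.
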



Next we give a result needed to prove Part (i) of Theorem~\ref{T:T1}.

\begin{proposition}\label{P:1}
Let $\ell\in\mathbb{N}$ and $p_1,\ldots,p_\ell\in\mathbb{R}[t]$  be real strongly independent polynomials. Let $X_i=G_i/\Gamma_i,$ $1\leq i\leq\ell,$ be nilmanifolds with $G_i$ connected and simply connected and elements $b_i\in G_i$ acting ergodically on $X_i.$ Then the sequence $$\Big(b_1^{p_1(n)}\Gamma_1,\ldots,b_\ell^{p_\ell(n)}\Gamma_\ell\Big)_{n}$$ is equidistributed in the nilmanifold $X_1\times\cdots\times X_\ell.$
\end{proposition}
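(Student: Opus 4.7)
The plan is to reduce the claimed equidistribution on the product nilmanifold to an equidistribution statement on its maximal torus factor via Leibman's theorem (Theorem~\ref{T:t1}), and then verify the latter through Weyl's criterion and the strong independence of $p_1,\ldots,p_\ell$.

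First I would form the product nilmanifold $X:=X_1\times\cdots\times X_\ell=G/\Gamma$, with $G:=G_1\times\cdots\times G_\ell$ still connected and simply connected and $\Gamma:=\Gamma_1\times\cdots\times\Gamma_\ell$, and view the sequence of interest as $(g(n)\Gamma)_n$ for the polynomial sequence $g(n):=(b_1^{p_1(n)},\ldots,b_\ell^{p_\ell(n)})$ in $G$ (the real powers $b_i^{p_i(n)}=\exp(p_i(n)Y_i)$, where $\exp(Y_i)=b_i$, are well-defined because each $G_i$ is connected and simply connected). Part (ii) of Theorem~\ref{T:t1} then reduces the problem to proving equidistribution of the projection in $Z:=G/([G,G]\Gamma)=Z_1\times\cdots\times Z_\ell$, where each $Z_i=G_i/([G_i,G_i]\Gamma_i)$ is a torus $\T^{s_i}$. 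Under this identification, $\pi_i(b_i)$ becomes a rotation by some $\alpha_i\in\T^{s_i}$, and the projected sequence takes the form $(p_1(n)\alpha_1,\ldots,p_\ell(n)\alpha_\ell)_n$ in $\T^{s_1+\cdots+s_\ell}$. Since factors preserve ergodicity, the rotation by $\alpha_i$ is ergodic on $\T^{s_i}$, which is equivalent to $1,\alpha_{i,1},\ldots,\alpha_{i,s_i}$ being $\Q$-linearly independent.

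By Weyl's criterion, the desired equidistribution on the torus reduces to showing that, for every non-zero $(k_1,\ldots,k_\ell)\in\Z^{s_1}\times\cdots\times\Z^{s_\ell}$, the one-variable real polynomial $P(t):=\sum_{i=1}^\ell \beta_i\,p_i(t)$, with $\beta_i:=k_i\cdot\alpha_i$, satisfies $(P(n))_n$ equidistributed $\bmod\,1$. The rational independence of $1$ with the coordinates of $\alpha_i$ forces $\beta_i$ to be irrational (and hence non-zero) whenever $k_i\neq 0$, so $(\beta_1,\ldots,\beta_\ell)$ is a non-trivial tuple of real numbers. The strong independence of $p_1,\ldots,p_\ell$ then guarantees that $P$ carries at least one irrational non-constant coefficient, and Weyl's classical equidistribution theorem for polynomials finishes the argument.

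The main conceptual subtlety, and what I expect to be the least automatic step, is the invocation of strong independence: it must be read as ruling out non-trivial \emph{real} linear combinations of the $p_i$ from being of the form $cq(t)+d$ with $q\in\Q[t]$, because the $\beta_i$ coming out of the torus projection are generically irrational rather than rational. This is precisely why the hypothesis of the proposition is formulated the way it is, and is what makes Leibman's reduction to the torus dovetail with Weyl's criterion.
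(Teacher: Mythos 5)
Your proposal follows essentially the same route as the paper's proof: reduce via Leibman's Theorem~\ref{T:t1}(ii) to the maximal torus factor of the product nilmanifold, use ergodicity of each $b_i$ to obtain rational independence of the (lifted) rotation coordinates, and conclude with Weyl's criterion together with the strong independence hypothesis. The only cosmetic difference is that the paper works with explicit real lifts $\beta'_{i,j}$ of the torus coordinates of $b_i^t$ (needed because $p_i(n)$ is a real exponent), a detail you implicitly absorb into your $\alpha_i$.
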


\begin{proof}
For convenience, since the general case is similar, we assume that $X_1=\ldots=X_\ell=X.$ First notice that the sequence $\Big(b_1^{p_1(n)},\ldots,b_\ell^{p_\ell(n)}\Big)_{n}$ is a polynomial sequence in $G^\ell.$ Since $X^\ell=G^\ell/\Gamma^\ell$ with $G^\ell$ connected and simply connected, we can apply Theorem~\ref{T:t1}. So, in order to prove that  $\Big(b_1^{p_1(n)}\Gamma,\ldots,b_\ell^{p_\ell(n)}\Gamma\Big)_{n}$ is equidistributed in $G^\ell $,
it suffices to show that  $\Big(\pi(b_1^{p_1(n)}\Gamma),\ldots,\pi(b_\ell^{p_\ell(n)}\Gamma)\Big)_{n}$ is equidistributed in $Z^\ell,$ where $Z=G/([G,G]\Gamma)$ and $\pi:X\to Z$ is the natural projection.

Since $ G $ is connected and simply connected $ Z $ is isomorphic to some finite dimensional torus $ \T^s $ and as a consequence every nilrotation in $ Z $ is isomorphic to a rotation on $ \T^s $.
Hence, for every $1\leq i\leq \ell$ we have $\pi(b_i\Gamma)=(\beta_{i,1}\mathbb{Z},\ldots,\beta_{i,s}\mathbb{Z})$, where $\beta_{i,j}\in \mathbb{R}$ and $(\beta_{i,1},\ldots,\beta_{i,s})$ is the projection of $b_i$ on $\mathbb{T}^s$ (note that the $s$ is bounded by the dimension of $X$). Since every $b_i$ acts ergodically on $X$, we have that for all $1\leq i\leq \ell$ the set of real numbers $\{1,\beta_{i,1},\ldots,\beta_{i,s}\}$ is rationally independent. Also, for every $t\in \mathbb{R}$ and $1\leq i\leq \ell$ we have that $\pi(b^t_i\Gamma)=(t\beta'_{i,1}\mathbb{Z},\ldots,t\beta'_{i,s}\mathbb{Z})$, for some $\beta'_{i,j}\in \mathbb{R}$ with $\beta'_{i,j}\mathbb{Z}=\beta_{i,j}\mathbb{Z},$ and so
$\pi\Big(b^{p_i(n)}_i\Gamma\Big)=(p_i(n)\beta'_{i,1}\mathbb{Z},\ldots,p_i(n)\beta'_{i,s}\mathbb{Z})$. Note that $\{1,\beta'_{i,1},\ldots,\beta'_{i,s}\}$ is also a rationally independent set for all $1\leq i\leq \ell.$

Our objective now is to establish the equididstribution of the sequence
\[ \left((p_1(n)\beta'_{1,1}\Z, \ldots, p_1(n)\beta'_{1,s}\Z, \ldots, p_\ell(n)\beta'_{\ell,1}\Z, \ldots, p_\ell(n)\beta'_{\ell,s}\Z )\right)_{n} \] 
on $ \T^{\ell s} $. To verify this we use Weyl's criterion (\cite{W}).
 
Let $ {\bf{h}}=(h_{1,1},\ldots,h_{1,s},\ldots,h_{\ell,1},\ldots,h_{\ell,s})\in \mathbb{Z}^{\ell s}\setminus \{(0,\ldots, 0)\} $. Because of the aforementioned rational independence, not all the sums $\sum_{j=1}^s h_{i,j}\beta'_{i,j},$ $1\leq i\leq \ell,$ are equal to $0,$ so, using the strong independence of the family of polynomials $\{p_1,\ldots,p_\ell\}$ we get that the polynomial $\sum_{i=1}^\ell \Big(\sum_{j=1}^s h_{i,j}\beta'_{i,j}\Big)p_i(n)$ has at least one non-constant irrational coefficient. So
$$\lim_{N\to\infty}\frac{1}{N}\sum_{n=1}^N \e\left({\bf{h}}\cdot(p_{1}(n)\beta'_{1,1},\ldots,p_{1}(n)\beta'_{1,s},\ldots,p_{\ell}(n)\beta'_{\ell,1},\ldots,p_{\ell}(n)\beta'_{\ell,s})\right)= $$
$$\lim_{N\to\infty}\frac{1}{N}\sum_{n=1}^N e\Big(\sum_{i=1}^\ell \Big(\sum_{j=1}^s h_{i,j}\beta'_{i,j}\Big)p_i(n)\Big)=0.$$ 
By Weyl's equidistribution criterion, the result follows.
\end{proof}

The last ingredient in proving Part  (i)  of Theorem~\ref{T:T1} is the following lemma:

\begin{lemma}[Lemma~5.2, \cite{F2}]\label{L:5.2}
Let $X=G/\Gamma$ be a nilmanifold with $G$ connected and simply connected. Then for every $b_1,\ldots,b_\ell\in G$ there exists an $s_0\in \mathbb{R}$ such that for all $1\leq i\leq \ell$ the element $b_i^{s_0}$ acts ergodically on the nilmanifold $\overline{(b_i^s \Gamma)}_{s\in\mathbb{R}}.$
\end{lemma}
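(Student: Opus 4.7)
The plan is to reduce ergodicity of each $b_i^{s_0}$ on the sub-nilmanifold $Y_i := \overline{(b_i^s\Gamma)}_{s\in\R}$ to a rotation-ergodicity condition on its horizontal torus, and then to show that the set of ``bad'' values of $s_0$ is countable for each $i$, so that a good common $s_0\in\R$ can be extracted by a counting argument.

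First, I fix $i$. By the background material on orbit closures of one-parameter subgroups, $Y_i$ is a sub-nilmanifold of $X$ and the continuous flow $(b_i^s)_{s\in\R}$ is equidistributed in $Y_i$. Let $\widetilde Z_i \cong \T^{s_i}$ be the horizontal (maximal torus) factor of $Y_i$ and $\widetilde\pi_i : Y_i \to \widetilde Z_i$ the natural projection. The projection of the one-parameter subgroup $\{b_i^t\}_{t\in\R}$ to $\widetilde Z_i$ is a linear flow, so there exist $\alpha_{i,1},\ldots,\alpha_{i,s_i}\in\R$ with $\widetilde\pi_i(b_i^t\Gamma)=(t\alpha_{i,1},\ldots,t\alpha_{i,s_i})$ for every $t\in\R$. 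Ergodicity of the continuous flow on $Y_i$ descends to ergodicity of the linear flow on $\widetilde Z_i$, which is equivalent to rational independence of $\{\alpha_{i,1},\ldots,\alpha_{i,s_i}\}$ over $\Q$.

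Next, I apply Theorem~\ref{T:t1}(ii) to the polynomial sequence $g(n)=b_i^{s_0 n}$ on $Y_i$: the discrete orbit $(b_i^{s_0 n}\Gamma)_n$ is equidistributed in $Y_i$ -- equivalently, $b_i^{s_0}$ acts ergodically on $Y_i$ -- if and only if its horizontal projection $(ns_0\alpha_{i,1},\ldots,ns_0\alpha_{i,s_i})_n$ is equidistributed on $\widetilde Z_i$, which by Weyl's criterion is equivalent to rational independence of $\{1,s_0\alpha_{i,1},\ldots,s_0\alpha_{i,s_i}\}$ over $\Q$. Denote by $B_i\subseteq\R$ the set of $s_0$ for which this fails. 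Any failure amounts to integers $k_0\in\Z$ and $(k_1,\ldots,k_{s_i})\in\Z^{s_i}\setminus\{\mathbf{0}\}$ with $k_0+s_0\sum_{j=1}^{s_i}k_j\alpha_{i,j}=0$; since $\sum_j k_j\alpha_{i,j}\neq 0$ by the previous paragraph, each such relation determines at most one value of $s_0$. Hence $B_i$ is countable, $B:=\bigcup_{i=1}^\ell B_i$ is countable, and any $s_0\in\R\setminus B$ satisfies the conclusion of the lemma.

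The main technical point is the first step: setting up the horizontal torus structure on each $Y_i$ and justifying that continuous ergodicity yields rational independence of the $\alpha_{i,j}$. The connected-and-simply-connected hypothesis on $G$ enters precisely here, to guarantee that one-parameter subgroups $\{b_i^t\}_{t\in\R}$ are well-defined and that their projections to the relevant tori behave as described. After that, the argument is a routine combination of Leibman's theorem, Weyl's criterion, and a cardinality count against the uncountability of $\R$.
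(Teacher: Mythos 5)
The paper does not actually prove this lemma---it is imported verbatim from Frantzikinakis \cite{F2} (Lemma~5.2 there)---and your argument is correct and is essentially the proof given in that source: reduce ergodicity of $b_i^{s_0}$ on $Y_i=\overline{(b_i^s\Gamma)}_{s\in\R}$ to equidistribution of a rotation on the maximal torus of $Y_i$ via Theorem~\ref{T:t1}(ii), use equidistribution of the flow to get rational independence of the direction vector $(\alpha_{i,1},\ldots,\alpha_{i,s_i})$, and note that the exceptional set of $s_0$ is a countable union of singletons. The only step worth making explicit is that $Y_i$ is presented as $H_i/(H_i\cap\Gamma)$ with $H_i$ a closed \emph{connected and simply connected} subgroup of $G$ (automatic for closed connected subgroups of a connected, simply connected nilpotent Lie group), since that is what licenses applying Leibman's theorem and the maximal-torus reduction to $Y_i$ itself rather than to $X$.
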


We are now ready to prove Theorem \ref{T:T1}.

\begin{proof}[Proof of Theorem~\ref{T:T1}]
Using Lemma \ref{L:5.1} we see that Part (ii) of Theorem \ref{T:T1} follows from Part
(i). To establish Part (i) let $ b_1,\ldots, b_\ell \in G $. By Lemma \ref{L:5.2} there exists a non-zero $ s_0\in \R $ such that for every $1\leq i\leq \ell$ the element $b_i^{s_0}$ acts ergodically on the nilmanifold $\overline{(b_i^s \Gamma)}_{s\in\mathbb{R}}.$ Using Proposition \ref{P:1} for the elements $ b_i^{s_0} $ and the polynomials $ p_i(s)/s_0 $ (who trivially are still strongly independent) we get that the sequence
$\Big(b_1^{p_1(n)}\Gamma,\ldots,b_\ell^{p_\ell(n)}\Gamma\Big)_{n}$ is equidistributed in the nilmanifold  $\overline{(b_1^s\Gamma)}_{s\in\mathbb{R}}\times\cdots\times \overline{(b_\ell^s\Gamma)}_{s\in\mathbb{R}}$, hence we get the conclusion.
\end{proof}






\section{Proof of main results}

In this last section we present the proofs of our main results, namely Theorems \ref{T:ss}, \ref{T:ssp1} and \ref{T:ssp}.

We first give the proof of Theorem \ref{T:ss}. In order to do so we recall from \cite{F1} that  the nilfactor $ \mathcal{Z} $ of a system is characteristic for the family $ \{ p_1,\ldots, p_\ell \} $, where $ p_1,\ldots, p_\ell\in \R[t] $ are real strongly independent polynomials.
 Actually, the statement in \cite{F1} is about \emph{nice} families of polynomials (see definition below), a notion more general than the strong independence that we use here. 

\begin{definition*}[\cite{F1}]
Let $\ell \in\mathbb{N}$ and for $N\in\mathbb{N},$ let $\mathcal{P}_N = \{p_{1,N},\ldots, p_{\ell,N}\}$ be a family of polynomials with real coefficients. We say that the collection $(P_N)_N$ is \emph{nice} if for every $N\in\mathbb{N}$ the polynomials $p_{i,N}$ and $p_{i,N}-p_{j,N},$ $i\neq j,$ 
are non-constant and their leading coefficients are independent of $N.$
\end{definition*}

The following lemma shows that for a nice collection of polynomial families the nilfactor is the characteristic factor as well  (a different proof of this fact is also given in \cite{L1}).

\begin{lemma}[Lemma~4.7, \cite{F1}]\label{L:4.7}
Let $(\{p_{1,N},\ldots,p_{\ell,N}\})_N$ be a nice collection of polynomial families, $(X,\mathcal{B},\mu,T)$ be a system and suppose that one of the functions $f_1,\ldots,f_\ell\in L^\infty(\mu)$ is orthogonal to the nilfactor $\mathcal{Z}.$ Then for any F{\o}lner sequence $(\Phi_N)_N$ in $\mathbb{Z}$ \footnote{A \emph{F{\o}lner sequence in $\mathbb{Z}$} is a sequence $(\Phi_n)_n$ of finite sunsets of $\Z$ that for any $m\in\Z$ we have $\lim_{n\to\infty}\frac{|(\Phi_n+m)\cap \Phi_n|}{|\Phi_n|}=1.$} and any bounded two parameter sequence $(c_{N,n})_{N,n}$ of real numbers we have \begin{equation}\label{E:np}
\lim_{N\to\infty} \frac{1}{|\Phi_N|}\sum_{n\in \Phi_N} c_{N,n} T^{[p_{1,N}(n)]}f_1\cdot\ldots\cdot T^{[p_{\ell,N}(n)]}f_\ell=0,
\end{equation}
where the convergence takes place in $L^2(\mu).$
\end{lemma}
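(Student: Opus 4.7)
The plan is to use the Polynomial Exhaustion Technique (PET) of Bergelson, adapted to integer-part polynomial iterates, in order to bound the $L^2$-norm of the average in~\eqref{E:np} by some fixed power of the Host--Kra seminorm $\nnorm{f_{i_0}}_s$ of one of the functions $f_{i_0}$, for an integer $s$ depending only on the complexity of the polynomial family. Since $\E(f_{i_0}|\mathcal{Z})=0$ is equivalent to $\E(f_{i_0}|\mathcal{Z}_k)=0$ for every $k$, and by the Host--Kra theory this is in turn equivalent to $\nnorm{f_{i_0}}_k=0$ for every $k\geq 1$, such a bound is enough to conclude.

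First I would apply van der Corput's lemma (in the form valid along F{\o}lner sequences in $\Z$) to the $L^2(\mu)$-valued sequence $v_n:=c_{N,n}\,T^{[p_{1,N}(n)]}f_1\cdots T^{[p_{\ell,N}(n)]}f_\ell$. This bounds the squared norm of the average by a double average of $\langle v_n, v_{n+h}\rangle_{L^2(\mu)}$ over small $|h|<H$. Expanding, applying $T^{-[p_{\ell,N}(n)]}$ inside the integral, and absorbing the product $c_{N,n}\overline{c_{N,n+h}}$ into a new bounded two-parameter coefficient produces an average of the same shape as~\eqref{E:np}, but now with $2\ell-1$ functions and with iterates $[p_{i,N}(n+h)]-[p_{\ell,N}(n)]$, $[p_{i,N}(n)]-[p_{\ell,N}(n)]$ for $i<\ell$, and $[p_{\ell,N}(n+h)]-[p_{\ell,N}(n)]$.

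I would then iterate, at each stage choosing the ``heaviest'' polynomial in the family according to the standard PET weight (a lexicographic ordering on the multiset of leading coefficients and degrees of the polynomials appearing). The niceness assumption --- that the leading coefficients of $p_{i,N}$ and of $p_{i,N}-p_{j,N}$ are non-zero and independent of $N$ --- is exactly what ensures the weight decreases strictly and uniformly in $N$ at every step, so that finitely many PET steps (a number depending only on the initial weight, not on $N$) reduce matters to a base case of the form $\frac{1}{|\Phi_N|}\sum_n c'_{N,n}\,T^{[q_N(n)]}g$ with $g$ a product of shifts and conjugates of the $f_i$'s that still features $f_{i_0}$ in a non-degenerate slot, and $q_N$ a single polynomial of bounded degree. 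For such a single-factor average, the Host--Kra structure theorem~\ref{T:HK} together with Leibman's equidistribution theorem~\ref{T:t1} identifies $\mathcal{Z}_s$ as a characteristic factor for some fixed $s$; unwinding the Cauchy--Schwarz estimates inherited from the PET steps then expresses the original average as controlled by a fixed power of $\nnorm{f_{i_0}}_s$.

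The principal obstacle is that the differences $[p_{i,N}(n+h)]-[p_{\ell,N}(n)]$ produced by each van der Corput step are \emph{not} themselves of the form $[\tilde p(n,h)]$ for any real polynomial $\tilde p$, because of $O(1)$ fractional-part errors; hence the class ``integer parts of nice polynomial families'' is not a priori stable under the PET iteration. The resolution, which is the genuine content of this lemma beyond classical PET for integer-valued polynomial iterates, is to smooth out the integer parts before PET is started, either by approximating the characteristic function of an interval by a Fej\'er-type trigonometric polynomial (so that operators $T^{[p(n)]}$ are replaced by averaged expressions in $T^{p(n)}$ on a suitable extension), or by embedding $T$ in an $\R$-action so that $T^{[p(n)]}$ agrees with $T^{p(n)}$ up to a bounded fibre translation. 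Once such a device is installed, each induction step preserves a tractable polynomial structure and the argument proceeds as outlined.
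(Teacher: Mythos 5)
First, a point of reference: the paper offers no proof of this lemma at all --- it is imported verbatim (as Lemma~4.7) from Frantzikinakis \cite{F1} --- so the comparison below is with the argument given there. Your overall architecture matches that argument: van der Corput along the F{\o}lner sequence, a PET induction driven by a weight that the niceness hypothesis keeps independent of $N$, and a final reduction to control by a Host--Kra seminorm of $f_{i_0}$, which vanishes because $f_{i_0}\perp\mathcal{Z}$ forces $\nnorm{f_{i_0}}_k=0$ for every $k$.

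The genuine gap is in the one step you yourself flag as the ``principal obstacle'' and then only gesture at: keeping the class of averages closed under the van der Corput step despite the integer parts. Neither of your two proposed devices is carried out, and both are problematic. The Fej\'er-smoothing route founders on the discontinuity of $\mathbf{1}_{[0,1)}$: niceness does not exclude polynomials with rational (even integer) coefficients, e.g.\ the family $\{n^2,2n^2\}$ is nice, and then $\{p_{i,N}(n)\}$ sits permanently at the endpoint of the interval, where a trigonometric approximation of the indicator has non-vanishing error; no uniform control is available. The suspension-flow route requires evaluating the lifted functions on the null section $X\times\{0\}$, so $L^2$ convergence for the flow does not transfer back to $L^2(\mu)$ without additional arguments, and one must further relate the nilfactor of the suspension to $\mathcal{Z}(T)$. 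The resolution in \cite{F1} is simpler and is precisely the reason the lemma is stated with arbitrary bounded weights $c_{N,n}$: after van der Corput one writes $[p_{i,N}(n+h)]-[p_{\ell,N}(n)]=[p_{i,N}(n+h)-p_{\ell,N}(n)]+\epsilon$ with $\epsilon\in\{0,1\}$, expands $T^{\epsilon}f_i=\mathbf{1}_{\{\epsilon=0\}}f_i+\mathbf{1}_{\{\epsilon=1\}}Tf_i$, absorbs the indicator functions into new bounded two-parameter weights, and replaces each $f_i$ by a bounded power $T^jf_i$ (which preserves orthogonality to $\mathcal{Z}$). The iterates are then again integer parts of a nice family and the induction closes within the class described by the lemma. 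Without this (or an equally explicit substitute) your PET iteration never gets past its first step, so as written the proof is incomplete.
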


\begin{remark*}
For $\ell\in\mathbb{N},$ let a strongly independent family of polynomials $\{p_1,\ldots,p_\ell\}.$ Then, trivially this collection is nice, so we have \eqref{E:np}, hence the nilfactor $\mathcal{Z}$ is the characteristic factor for this family. 
\end{remark*}

\begin{proof}[Proof of Theorem~\ref{T:ss}]
We start by using Lemma \ref{L:4.7} in order to get that the nilfactor
$ \mathcal{Z} $ is characteristic  for the corresponding multiple ergodic average. Via Theorem~\ref{T:HK} we can assume without loss of generality that our system is an inverse limit of nilsystems. By a standard approximation argument, we can further assume that our system is a nilsystem.

Let $(X=G/\Gamma,\mathcal{G}/\Gamma,m_X,T_b)$ be a nilsystem, where $ b\in G $ is ergodic, and $ F_1,\ldots,F_\ell\in L^\infty(m_X) $. Our objective now is show that if $ \{p_1,\ldots, p_\ell \} $ is a strongly independent family of polynomials then 
\begin{equation}\label{E:e3}
\lim_{N\to \infty}\sum_{n=1}^N F_1(b^{[p_1(n)]}x)\cdot \ldots \cdot F_\ell(b^{[p_\ell(n)]}x)= \int F_1 \; dm_X \cdot \ldots \cdot \int F_\ell \; dm_X
\end{equation}
where the convergence takes place in $ L^2(m_X) $. By density we can assume that the functions $ F_1,\ldots,F_\ell $ are continuous. Then we can apply Theorem~\ref{T:T1} to the nilmanifold $ X^\ell $, the nilrotation
$ \tilde{b}=(b,\ldots, b)\in G^\ell $, the point
$ \tilde{x}=(x,\ldots, x)\in X^\ell $, and the continuous function
$ \tilde{F}(x_1,\ldots,x_\ell)=F_1(x_1)\cdot\ldots\cdot F_\ell(x_\ell) $, we get that
\[ \lim_{N\to \infty}\sum_{n=1}^N \tilde{F}(b^{[p_1(n)]}x, \ldots, b^{[p_\ell(n)]}x)= \int \tilde{F} \; dm_{X^\ell} \]
and this gives the desired limit in \eqref{E:e3}, completing the proof.
\end{proof}


\subsection{Convergence along Primes}
We first give the definitions and the main ideas in order to prove the Theorems~\ref{T:ssp1} and ~\ref{T:ssp}. 

\medskip

We start by recalling the definition of the {\em von Mangoldt function}, $\Lambda:\N\to\R,$ where $ \Lambda(n)=\left\{ \begin{array}{ll} \log(p) \quad ,\text{ if } n=p^k \text{ for some } p\in \P \text{ and some }k\in \N\\ 0 \;\;\; \; \quad \quad ,\text{ elsewhere }\end{array} \right. .$ 

\medskip

As in \cite{FHK} and \cite{K2} it is more natural for us to deal, in stead of $\Lambda,$ with the function $\Lambda':\N\to\R,$ where $\Lambda'(n)={\bf 1}_{\P}(n)\cdot \Lambda(n)={\bf 1}_{\P}(n)\cdot \log(n).$

\medskip

The function $\Lambda',$ according to the following lemma, will allow us to relate averages along primes with weighted averages over the integers.

\begin{lemma}[\cite{FHK2}]\label{L:n}
If $a:\N\to\C$ is bounded, then
$$
\lim_{N\to\infty}\left| \frac{1}{\pi(N)}\sum_{p\in \P\cap[1,N]}a(p)-\frac{1}{N}\sum_{n=1}^N \Lambda'(n)\cdot a(n)\right|=0.$$
\end{lemma}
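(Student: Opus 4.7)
The plan is to apply summation by parts to recast the weighted sum
$$\frac{1}{N}\sum_{n=1}^N \Lambda'(n)a(n) \;=\; \frac{1}{N}\sum_{p\in\P\cap[1,N]}\log p\cdot a(p)$$
in a form directly comparable with $\frac{1}{\pi(N)}\sum_{p\in\P\cap[1,N]}a(p)$, and then to invoke the Prime Number Theorem in the form $\pi(N)\log N/N\to 1$ to kill the error.

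First I would introduce the partial-sum function $S(x):=\sum_{p\in\P\cap[1,x]}a(p)$, which satisfies the trivial bound $|S(x)|\leq \|a\|_\infty\,\pi(x)$. Abel summation against the jump measure defining $S$, using $d(\log t)=dt/t$, yields
$$\sum_{p\leq N}\log p\cdot a(p) \;=\; (\log N)\,S(N) \;-\; \int_2^N \frac{S(t)}{t}\,dt.$$
After dividing by $N$ and subtracting from $S(N)/\pi(N)$, the quantity inside the absolute value in the statement of the lemma becomes
$$S(N)\!\left(\frac{1}{\pi(N)}-\frac{\log N}{N}\right) \;+\; \frac{1}{N}\int_2^N \frac{S(t)}{t}\,dt,$$
so it suffices to show that each of the two summands tends to $0$ as $N\to\infty$.

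For the first summand, the bound $|S(N)|\leq \|a\|_\infty\pi(N)$ combined with the Prime Number Theorem gives
$$\left|S(N)\right|\!\left|\frac{1}{\pi(N)}-\frac{\log N}{N}\right|\;\leq\; \|a\|_\infty\left|1-\frac{\pi(N)\log N}{N}\right|\;\longrightarrow\; 0.$$
For the second summand, the estimate $|S(t)|/t\leq \|a\|_\infty\,\pi(t)/t=O(1/\log t)$ (already provided by Chebyshev's bound, and more sharply by PNT) shows the integrand tends to $0$; its Cesàro average over $[2,N]$ therefore also tends to $0$, in fact at rate $O(1/\log N)$. The only substantive step is the Abel summation identity; once it is in place the argument reduces to standard bookkeeping around the Prime Number Theorem, and no serious obstacle is anticipated.
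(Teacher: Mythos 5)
Your argument is correct: the Abel summation identity, the bound $|S(x)|\leq\|a\|_\infty\pi(x)$, and the Prime Number Theorem in the form $\pi(N)\log N/N\to 1$ (plus the Chebyshev estimate $\pi(t)/t=O(1/\log t)$ for the integral term) are exactly what is needed, and the bookkeeping checks out. The paper itself gives no proof of this lemma --- it is quoted from \cite{FHK2} --- and the proof there is the same standard partial-summation argument, so there is nothing further to compare.
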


\noindent For $w>2,$ let 
$$
W=\prod_{p\in\P\cap[1,w-1]}p$$ be the product of primes bounded above by $w.$ For $r\in \N,$ let 
$$
\Lambda_{w,r}'(n)=\frac{\phi(W)}{W}\cdot \Lambda'(Wn+r),$$ where $\phi$ is the Euler function, be the {\em modified von Mangoldt function}.

\begin{definition*}
For $\ell \in \N,$ we call the setting $(X,\mathcal{B},\mu,T_1,\ldots,T_\ell)$  a \emph{system}, where $T_1,\ldots, T_\ell\colon$ $ X\to X$ are invertible commuting measure preserving transformations on the probability space $(X,\mathcal{B},\mu).$ 
\end{definition*}

The proposition below, the proof of which relies on a deep result due to Green and Tao (\cite{GT}) on the inverse conjecture for the Gowers norm, will provide us with a crucial intermediate step in order to prove Theorems~\ref{T:ssp1} and ~\ref{T:ssp} (we will actually use a very weak version of it for these results).

\begin{proposition}[Proposition~3.2, \cite{K2}]\label{P:1K2}
Let $\ell, m\in \N,$ $(X,\mathcal{B},\mu,T_1,\ldots, T_m)$ be a system, $p_{i,j}\in\R[t]$ be real polynomials, $1\leq i\leq m,$ $1\leq j\leq \ell$ and  $f_1,\ldots,f_\ell\in L^{\infty}(\mu).$

Then,
$$
 \max_{1\leq r\leq W,\;(r,W)=1}\norm{\frac{1}{N}\sum_{n=1}^N(\Lambda_{w,r}'(n)-1)\cdot (\prod_{i=1}^m T_i^{[p_{i,1}(Wn+r)]})f_1\cdot\ldots\cdot(\prod_{i=1}^m T_i^{[p_{i,\ell}(Wn+r)]})f_\ell}_{L^2(\mu)}
$$
converges to $0$ as $N\to\infty$ and then $w\to\infty.$
\end{proposition}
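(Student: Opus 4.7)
The plan is to follow the template of \cite{FHK} (adapted to integer-part polynomial iterates in \cite{K2}), in which weighted polynomial ergodic averages are reduced on one side to a Gowers-type seminorm of the weight, while on the other side the weight $\Lambda'_{w,r}-1$ is shown to have vanishing Gowers norms via Green-Tao~\cite{GT}.

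Concretely, set
$$F_{r}(n):=\prod_{j=1}^{\ell}\Big(\prod_{i=1}^{m}T_i^{[p_{i,j}(Wn+r)]}\Big)f_j,$$
so that the quantity
$$A_{N,w}:=\max_{(r,W)=1}\norm{\frac{1}{N}\sum_{n=1}^{N}(\Lambda'_{w,r}(n)-1)\,F_{r}(n)}_{L^2(\mu)}$$
must be shown to satisfy $\limsup_{N\to\infty}A_{N,w}\to 0$ as $w\to\infty$. The core step is a \emph{Gowers-uniformity estimate}: there exist $s=s(\ell,m,\max_{i,j}\deg p_{i,j})\in\N$ and $C>0$ such that for every bounded sequence $(a_n)_n$ and every $r$ coprime to $W$,
$$\norm{\frac{1}{N}\sum_{n=1}^{N}a_n\,F_{r}(n)}_{L^2(\mu)}\,\leq\,C\,\norm{a}_{U^{s}[N]}+o_{N\to\infty}(1).$$
I would prove this by a PET-type van der Corput induction on the tuple of leading degrees of the $p_{i,j}$, applying Cauchy-Schwarz at each step to eliminate active factors. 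The non-smooth integer parts are dealt with by writing $[p_{i,j}(Wn+r)]=p_{i,j}(Wn+r)-\{p_{i,j}(Wn+r)\}$ and Fourier-expanding the fractional parts against Fej\'er-type kernels, in the spirit of Sections~4 and 5 of \cite{F1}. The substitution $n\mapsto Wn+r$ merely shifts the polynomials and preserves the structure of their leading coefficients, so it introduces no new difficulty, though some care is needed to keep all constants uniform in $r$.

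Once this estimate is available, applying it with $a_n=\Lambda'_{w,r}(n)-1$ reduces the proposition to the Green-Tao bound
$$\lim_{w\to\infty}\limsup_{N\to\infty}\max_{(r,W)=1}\norm{\Lambda'_{w,r}-1}_{U^{s}[N]}=0,$$
which is the Gowers-norm uniformity of the $W$-tricked von Mangoldt function established in \cite{GT}. Combining the two bounds and letting first $N\to\infty$ and then $w\to\infty$ yields the claim.

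The hard part is the Gowers-uniformity estimate itself. In the commuting-transformations setting the van der Corput step of the PET scheme produces mixed exponent differences
$$[p_{i,j}(W(n+h)+r)]-[p_{i,j}(Wn+r)],$$
which are not polynomial in $n$ because of the integer parts; so one needs a careful Fourier bookkeeping of the fractional-part discrepancies in order to close the induction. Additionally, one must ensure that both the implicit constant $C$ and the $o(1)$ error term are uniform in $r\in\{1,\ldots,W\}$, which works because the leading coefficients of $p_{i,j}(Wn+r)$, viewed as polynomials in $n$, do not depend on $r$.
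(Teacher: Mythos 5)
The paper does not actually prove this proposition: it is imported verbatim as Proposition~3.2 of \cite{K2}, with only the remark that its proof rests on the deep Green--Tao result \cite{GT} on Gowers uniformity. Your outline reproduces exactly the strategy of that cited proof (and of its model in \cite{FHK}): a generalized von Neumann-type estimate bounding the weighted average by a Gowers norm $\norm{a}_{U^s[N]}$ of the weight, obtained via PET/van der Corput induction together with a Fourier treatment of the fractional parts, followed by the Green--Tao bound $\lim_{w\to\infty}\limsup_{N\to\infty}\max_{(r,W)=1}\norm{\Lambda'_{w,r}-1}_{U^s[N]}=0$. So you are on the same route as the source; the only caveat is that your write-up is a plan rather than a proof --- the uniformity estimate, which carries all the technical weight, is asserted and its genuine difficulties (non-polynomial differences of integer parts in the van der Corput step, uniformity in $r$) are correctly identified but not executed. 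Since the present paper deliberately treats this statement as a black box, a citation to \cite{K2} is all that is required here.
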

\begin{proof}[Proof of Theorem~\ref{T:ssp1}]
We borrow the arguments from the proof of Theorem~1.3 from \cite{FHK} (see also Theorem~1.3 in \cite{K2}). By Lemma~\ref{L:n}  it suffices to show that the sequence
$$
A(N):=\frac{1}{N}\sum_{n=1}^N \Lambda'(n)\cdot  T^{[q(n)]}f_1\cdot T^{2[q(n)]}f_2\cdot\ldots\cdot  T^{\ell[q(n)]}f_\ell
$$ converges to the same limit as the sequence $\frac{1}{N}\sum_{n=1}^N  T^{n}f_1\cdot T^{2n}f_2\cdot\ldots\cdot  T^{\ell n}f_\ell,
$ where the convergence takes place in $L^2(\mu).$ 
 For $w$ (which gives a corresponding $W$), $r\in\N,$ we define $$B_{w,r}(N):=\frac{1}{N}\sum_{n=1}^N T^{[q(Wn+r)]}f_1\cdot T^{2[q(Wn+r)]}f_1\cdot\ldots\cdot T^{\ell[q(Wn+r)]}f_\ell.$$ For any $\varepsilon>0,$ using Proposition~\ref{P:1K2} for $m=\ell,$ $T_i=T,$ $1\leq i\leq \ell$ and $ p_{i,j}=\left\{ \begin{array}{ll} 0 \quad ,\text{ if } i\leq \ell-j\\ q \quad ,\text{ elsewhere }\end{array} \right. ,$ for sufficiently large $N$ and some $w_0$ we have 
\begin{equation*}\label{E:ff}
\norm{A(W_0 N)-\frac{1}{\phi(W_0)}\sum_{1\leq r\leq W_0,\;(r,W_0)=1}B_{w_0,r}(N)}_{L^2(\mu)}<\varepsilon.
\end{equation*}
Note at this point that for all $W, r\in\mathbb{N}$ we have that 
 $q(Wt+r)\notin c\mathbb{Q}[t]+d$ for $c,d\in \mathbb{R},$ for otherwise $q$ would have the same property contradicting our assumption.

By Theorem~\ref{T:mc}, we have that for any $1\leq r\leq W_0$ the sequence $(B_{w_0,r}(N))_N$ converges to the same limit as the sequence $\frac{1}{N}\sum_{n=1}^N  T^{n}f_1\cdot T^{2n}f_2\cdot\ldots\cdot  T^{\ell n}f_\ell,
$ and since  $$\lim_{N\to\infty}\norm{A(W_0 N+r)-A(W_0 N)}_{L^2(\mu)}=0$$ for every $1\leq r\leq W_0,$  we get the result.
\end{proof}

\begin{proof}[Proof of Theorem~\ref{T:ssp}]
The proof is analogous to the previous one. In this case we define
$
A(N):=\frac{1}{N}\sum_{n=1}^N \Lambda'(n)\cdot  T^{[p_{1}(n)]}f_1\cdot\ldots\cdot  T^{[p_{\ell}(n)]})f_\ell
$ and for $w,r\in\mathbb{N},$ $B_{w,r}(N):=\frac{1}{N}\sum_{n=1}^N T^{[p_{1}(Wn+r)]}f_1\cdot\ldots\cdot T^{[p_{\ell}(Wn+r)]}f_\ell.$ We use Proposition~\ref{P:1K2} for $m=\ell,$ $T_i=T,$ $1\leq i\leq \ell,$ $ p_{i,j}=\left\{ \begin{array}{ll} 0 \quad ,\text{ if } i\neq j\\ p_i \;\;\; ,\text{ if } i=j\end{array} \right. $ and we note that 
 the family $\{\tilde{p}_{1},\ldots, \tilde{p}_{\ell} \},$ where $\tilde{p}_i(t)=p_i(Wt+r),$ is strongly independent for all $W, r\in\mathbb{N}.$

 Indeed, if for some $(\lambda_1,\ldots,\lambda_\ell)\in \mathbb{R}^\ell\setminus\{\vec{0}\},$ $d\in \mathbb{R},$ $q\in \mathbb{Q}[t]$ and $W,r\in\mathbb{N}$ we had $\sum_{i=1}^\ell \lambda_i p_i(Wt+r)=q(t)+d,$ then $\sum_{i=1}^\ell \lambda_i p_i(t)=\tilde{q}(t)+d,$ where $\tilde{q}(t)=q((t-r)/W) \in \mathbb{Q}[t],$ a contradiction to the strong independence assumption. The result now follows similarly to the previous proof since 
by Theorem~\ref{T:ss}, we have that for any $1\leq r\leq W_0$ the sequence $(B_{w_0,r}(N))_N$ converges, in $L^2(\mu),$ to $\prod_{i=1}^\ell \int f_i\;d\mu.$
\end{proof}



  
  \subsection*{Acknowledgements} We would like to express our indebtedness  to N. Frantzikinakis who introduced us this problem, for his in-depth suggestions and the fruitful discussions we had during the writing procedure of this article. Also, the first author thanks his advisor V. Farmaki for her support during the writing of this paper and the second author thanks V. Bergelson for his constant support.


\begin{thebibliography}{9999}


\bibitem{B} V. Bergelson. Weakly mixing PET,
{\em Ergodic Theory Dynam. Systems} \textbf{7} (1987), no. 3, 337--349.

\bibitem{BK} V.~Bergelson, I.~H\r{a}land-Knutson. Weakly mixing implies mixing of higher orders along tempered functions. {\em Ergodic Theory
Dynam. Systems} \textbf{29} (2009), no. 5, 1375--1416.

\bibitem{BHK} V.~Bergelson, B.~Host and B.~Kra, with an appendix by I. Ruzsa. Multiple recurrence and nilsequences, {\em Inventiones Math.} \textbf{160} (2005), 261--303.



\bibitem{BL} V.~Bergelson, A.~Leibman. Polynomial extensions of van der Waerden's and Szemer\'{e}di's Theorems.
{\em J. Amer. Math. Soc.} \textbf{9} (1996), 725--753.




\bibitem{CFH} Q.~Chu, N.~Frantzikinakis, B.~Host. Ergodic averages of commuting transformations with distinct degree polynomial iterates. {\em Proc. of the London Math. Society.} (3), \textbf{102} (2011), 801--842.






\bibitem{F1}
N.~Frantzikinakis. Multiple recurrence and convergence for Hardy field sequences of polynomial growth. {\em Journal d'Analyse Mathematique} \textbf{112} (2010), 79--135.

\bibitem{F2}
N.~Frantzikinakis. Equidistribution of sparse sequences on nilmanifolds. {\em Journal d'Analyse Mathematique}, \textbf{109} (2009), 353--395.

\bibitem{F3}
N.~Frantzikinakis. Some open problems on multiple ergodic averages.
   {\em Bulletin of the Hellenic Mathematical Society} \textbf{60} (2016), 41--90.
   
\bibitem{F4} N.~Frantzikinakis.
A multidimensional Szemeredi theorem for Hardy sequences of polynomial growth. 
    {\em Tran. of the A. M. S.} \textbf{367} (2015), no. 8, 5653--5692.
    
\bibitem{F5} N.~Frantzikinakis. An averaged Chowla and Elliott conjecture along independent polynomials. 
    To appear in the International Mathematics Research Notices.


\bibitem{FHK2} N.~Frantzikinakis, B.~Host, B.~Kra. Multiple recurrence and convergence for sequences related to the prime numbers. {\em J. Reine Angew. Math.} \textbf{611} (2007), 131--144.

\bibitem{FHK} N.~Frantzikinakis, B.~Host, B.~Kra. The polynomial multidimensional Szemer\'edi theorem along shifted primes. {\em Israel J. Math.} \textbf{194} (2013),  no. 1, 331--348.



\bibitem{FK}
N.~Frantzikinakis, B.~Kra. Polynomial averages converge to the product of integrals. {\em Israel. J. Math.} \textbf{148} (2005), 267--276.

\bibitem{Fu}
H.~Furstenberg. Ergodic behavior of diagonal measures and a theorem of Szemer\'{e}di on arithmetic progressions. {\em J. Analyse Math.} \textbf{31} (1977), 204-–256.

\bibitem{FKO} H. Furstenberg, Y. Katznelson, D. Ornstein. The ergodic theoretical proof of Szemer\'{e}di's theorem.
{\em Bull. Amer. Math. Soc.} \textbf{7} (1982), 527--552.


\bibitem{GT}
B.~Green and T.~Tao. Linear equations in primes. {\em Ann. of Math.} (2) \textbf{171} (2010), 1753--1850.


\bibitem{HK99} B.~Host, B.~Kra. Nonconventional ergodic averages and nilmanifolds. {\em Annals of Math.} \textbf{161} (2005), no. 1, 397--488. 



\bibitem{K1} A.~Koutsogiannis. Integer part polynomial correlation sequences. {\em Ergodic
Theory Dynam. Systems}, 1--18. doi:10.1017/etds.2016.67.

\bibitem{K2} A.~Koutsogiannis. Closest integer polynomial multiple recurrence along shifted primes. {\em Ergodic
Theory Dynam. Systems}, 1--20. doi:10.1017/etds.2016.40.

\bibitem{L} A.~Leibman. Pointwise Convergence of ergodic averages for polynomial sequences of translations on a nilmanifold.
{\em Ergodic Theory Dynam. Systems} \textbf{25} (2005), no. 1, 201--213.

\bibitem{L1} A.~Leibman.  Convergence of multiple ergodic averages along polynomials of several variables. {\em Isr. J. Math.} \textbf{146} (2005), 303--316.








\bibitem{W} H.~Weyl. \"Uber die Gleichverteilung von Zahlen mod Eins.
\emph{Math. Ann.}, \textbf{77} (1916), 313--352.



\end{thebibliography}
\end{document}